\documentclass[11pt,reqno]{amsart}
\usepackage{tikz}
\textheight    23cm
\textwidth     15.cm
\addtolength{\textheight}{-0.75in}
\oddsidemargin   .4cm
\evensidemargin  .4cm
\parskip 6pt
\usepackage{subfig}
\usepackage{epstopdf}
\usepackage{epsfig}
\usepackage{math}
\graphicspath{{./figures/}}
\usepackage{tikz}
\usetikzlibrary{shapes,arrows}
\usepackage{adjustbox}
\usepackage{rotating}
\usepackage{graphicx}

\tikzstyle{decision} = [diamond, draw, fill=blue!20,
    text width=4.5em, text badly centered, node distance=3cm, inner sep=0pt]
\tikzstyle{block} = [rectangle, draw, fill=blue!20,
    text width=5em, text centered, rounded corners, minimum height=4em]
\tikzstyle{line} = [draw, -latex']
\tikzstyle{cloud} = [draw, ellipse,fill=red!20, node distance=3cm,
    minimum height=2em]

\usetikzlibrary{positioning}
\tikzset{main node/.style={circle,fill=blue!20,draw,minimum size=1cm,inner sep=0pt},  }

\newcommand{\argmin}{\text{arg}\min}
\newcommand{\argmax}{\text{arg}\max}
\begin{document}
\title[]{Controlling conservation laws I: entropy--entropy flux}
\author[Li]{Wuchen Li}
\email{wuchen@mailbox.sc.edu}
\address{Department of Mathematics, University of South Carolina, Columbia }
\author[Liu]{Siting Liu}
\email{siting6@math.ucla.edu}
\address{Department of Mathematics, University of California, Los Angeles}
\author[Osher]{Stanley Osher}
\email{sjo@math.ucla.edu}
\address{Department of Mathematics, University of California, Los Angeles}
\newcommand{\vr}{\overrightarrow}
\newcommand{\wt}{\widetilde}
\newcommand{\dd}{\mathcal{\dagger}}
\newcommand{\ts}{\mathsf{T}}
\keywords{Conservation laws; Entropy--entropy flux pairs; Optimal transport; Mean-field games; Hamiltonian flows; Monotone schemes; Primal-dual algorithms.}
\thanks{W. Li thanks the start-up funding from the University of South Carolina,  and DEPSCoR Research Collaboration white paper ``Transport information geometric modeling of complex systems ". W. Li, S. Liu and S. Osher thank the funding from AFOSR MURI FA9550-18-1-0502 and ONR grants: N00014-18-1-2527, N00014-20-1-2093, and N00014-20-1-2787.}
\begin{abstract}
We study a class of variational problems for regularized conservation laws with Lax's entropy-entropy flux pairs. We first introduce a modified optimal transport space based on conservation laws with diffusion. Using this space, we demonstrate that conservation laws with diffusion are ``flux--gradient flows''. We next construct variational problems for these flows, for which we derive dual PDE systems for regularized conservation laws. Several examples, including traffic flow and Burgers' equation, are presented. Incorporating both primal-dual algorithms and monotone schemes, we successfully compute the control of conservation laws.  
\end{abstract}
\maketitle
\section{Introduction}
Regularized conservation laws\footnote{For simplicity, we omit the regularized throughout the paper.} are essential classes of dynamics in physics, materials sciences, and mathematical modeling, with applications to inverse problems and AI sampling problems \cite{Evans, Lax, VH}. Examples of conservation law equations include traffic flows, Burgers' equation, and compressible Navier-Stokes equations, etc.

Consider a system of initial value PDEs below.
\begin{equation}\label{PDE}
\partial_t u(t,x)+\mathcal{B}(u(t,x))=\beta \mathcal{C}(u(t,x)),\quad u(0,x)=u_0,
\end{equation}
where $x\in \Omega\subset\mathbb{R}^n$, $u\colon [0,\infty)\times\Omega\rightarrow\mathbb{R}^d$ is a unknown vector function, $u_0$ is a given initial condition, $\mathcal{B}\colon C^{\infty}(\Omega; \mathbb{R}^d)\rightarrow C^{\infty}(\Omega; \mathbb{R}^d)$ is a ``conservative'' differential operator, $\mathcal{C}\colon C^{\infty}(\Omega; \mathbb{R}^d)\rightarrow C^{\infty}(\Omega; \mathbb{R}^d)$ is a dissipative differential (diffusion) operator and $\beta\geq 0$ is a diffusion constant. For simplicity of presentation, we assume that $\Omega$ is a compact convex set, and the PDE \eqref{PDE} has periodic boundary conditions. E.g., $\Omega=\mathbb{T}^n$, where $\mathbb{T}^n$ is a $n$--dimensional torus.

In this paper, we introduce variational problems related to the PDE \eqref{PDE}. They generalize mean-field information dynamics \cite{LiG1, LiHess}; see Figure \ref{flowchart}.
\begin{figure}
{\includegraphics[scale=0.24]{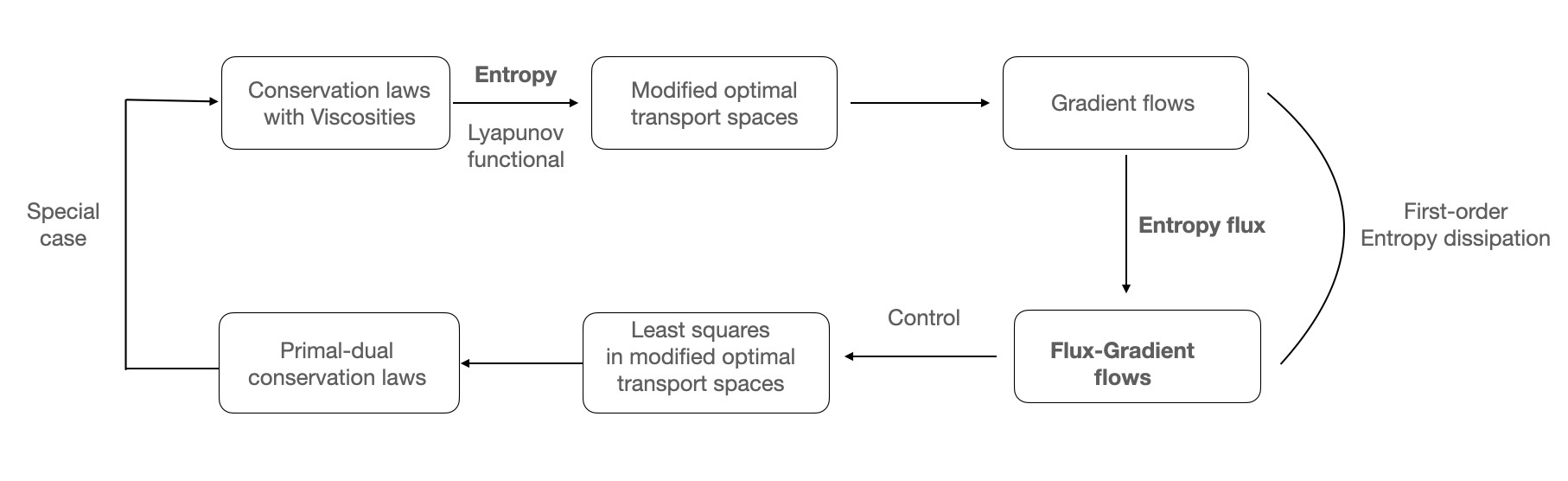}}
\caption{We study variational problems of conservation laws in entropy-entropy flux pairs induced metric spaces.}
\label{flowchart}
\end{figure}
Here we shall design suitable {\em modified optimal transport spaces} for PDE \eqref{PDE}, namely mean-field information metric spaces. In these metric spaces, we demonstrate that the PDE \eqref{PDE} has a dissipative variational structure. And we name PDE \eqref{PDE} {\em flux--gradient flows}. We then design a control problem of PDE \eqref{PDE} in metric space. By finding the critical point of control problem, we derive a dual PDE system for equation \eqref{PDE}. The primal-dual pair of PDE system satisfies a Hamiltonian flow in the metric space, where the Hamiltonian functional depends on the flux function and ``kinetic energy''. Several numerical examples, including traffic flow and Burgers' equation, are presented.

The main results are sketched below. 

\noindent\textbf{Assumption 1:} Suppose that there exists a function $G\colon \mathbb{R}^d\rightarrow\mathbb{R}$, such that
\begin{equation*}
\int_\Omega  G'(u)\cdot\mathcal{B}(u)  dx= 0.
\end{equation*}
\noindent\textbf{Assumption 2:} Suppose that there exists a ``symmetric nonnegative definite'' operator $L_\mathcal{C}(u)\colon C^\infty(\Omega)\rightarrow C^{\infty}(\Omega)$, such that
\begin{equation*}
\int_\Omega G'(u)\cdot \mathcal{C}(u)dx=-\int_\Omega (G'(u), L_\mathcal{C}(u)G'(u))dx\leq 0.
\end{equation*}
 Following \cite{Lax}, under assumption 1 and 2, we demonstrate that 
\begin{equation*}
\mathcal{G}(u(t,\cdot))=\int_\Omega G(u(t,x))dx,
\end{equation*}
forms a Lyapunov functional for PDE \eqref{PDE}. In other words, along PDE \eqref{PDE}, the time derivative of $\mathcal{G}(u)$ is non-positive:
\begin{equation*}
\begin{split}
\frac{d}{dt}\mathcal{G}(u(t,\cdot))=&\int_\Omega G'(u)\partial_tu dx=\int_\Omega G'(u)\cdot (-\mathcal{B}(u)+\beta \mathcal{C}(u)) dx\\
=&\beta\int_\Omega G'(u) \cdot\mathcal{C}(u) dx=-\beta\int_\Omega (G'(u), L_\mathcal{C}(u)G'(u))dx\leq 0.
\end{split}
\end{equation*}

Based on the Lyapunov functional $\mathcal{G}$, we design optimal control problems of PDE \eqref{PDE}. In detail: 
let $d=1$,  
\begin{equation*}
\mathcal{B}(u)=\nabla\cdot f(u), \quad \mathcal{C}(u)=\nabla\cdot(A(u)\nabla u),\quad \mathcal{L}_{\mathcal{C}}(u):=-\nabla\cdot(A(u)G''(u)^{-1}\nabla),
\end{equation*}
where $f\colon \mathbb{R}\rightarrow\mathbb{R}^n$ is a flux function and $A, AG''^{-1}\colon\Omega\rightarrow\mathbb{R}^{n\times n}$ are both symmetric non-negative definite  matrix functions.
Given a suitable potential functional $\mathcal{F}\colon C^{\infty}(\Omega;\mathbb{R})\rightarrow\mathbb{R}$ and a terminal functional $\mathcal{H}\colon C^{\infty}(\Omega;\mathbb{R})\rightarrow\mathbb{R}$, consider a variational problem 
\begin{subequations}\label{main_variation}
\begin{equation}\label{mv1}
\inf_{u, v, u_1}~\int_0^1\Big[\int_\Omega \frac{1}{2}\big(v, A(u)G''(u)^{-1}v\big)dx-\mathcal{F}(u)\Big]dt+\mathcal{H}(u_1),
\end{equation}
where the infimum is taken among variables $v\colon [0,1]\times \Omega\rightarrow\mathbb{R}^n$, $u\colon [0,1]\times\Omega\rightarrow\mathbb{R}$, and $u_1\colon\Omega\rightarrow\mathbb{R}$ satisfying 
\begin{equation}\label{mv2}
\begin{split}
\partial_t u+\nabla\cdot f(u)+\nabla\cdot(A(u)G''(u)^{-1}v)=\beta \nabla\cdot(A(u)\nabla u),\quad u(0,x)=u_0(x).
\end{split}
\end{equation}
\end{subequations}

By Proposition \ref{MFH}, we derive a critical point system of the optimal control problem \eqref{main_variation}. Define a Lagrange multiplier function $\Phi\colon [0,1]\times\Omega\rightarrow\mathbb{R}$. Then $v=\nabla\Phi$, and $(u,\Phi)$ satisfies a pair of PDEs:
\begin{subequations}\label{DS}
\begin{equation}\label{DS1}
\left\{\begin{aligned}
&\partial_t u+\mathcal{B}(u)-L_\mathcal{C}(u)(\Phi)-\beta \mathcal{C}(u)=0,\\
&\partial_t\Phi+\frac{\delta}{\delta u}\int_\Omega \Big(\frac{1}{2}(\Phi, L_\mathcal{C}(u)\Phi)-(\mathcal{B}(u), \Phi)+\beta (\Phi, \mathcal{C}(u))\Big)dx+\frac{\delta}{\delta u}\mathcal{F}(u)=0,
\end{aligned}\right.
\end{equation}
with both initial and terminal time boundary conditions
\begin{equation}\label{DS2}
u(0,x)=u_0(x),\quad \frac{\delta}{\delta u_1(x)}\mathcal{H}(u_1)+\Phi(1,x)=0.
\end{equation}
\end{subequations}
In above formulations, $\frac{\delta}{\delta u}$ is the $L^2$ first variation operator w.r.t. variable $u$. Here we aim to study the variational problem \eqref{main_variation} and its critical point system \eqref{DS} in the following two aspects. 
\begin{itemize}
\item[(i)] Modeling: We design and control models for conservation law dynamics. A typical example is that we control the kinetic energy of the density under traffic flow.  
\item[(ii)] Computation: By choosing some special potential and terminal energies, the minimizer of the mean--field information control problem \eqref{main_variation} is consistent with the initial value problem \eqref{PDE}. This leads to stable and convergent primal-dual algorithms for initial value conservation laws. 
\end{itemize}

In the literature, there are joint studies towards dynamics in mean-field information metric spaces; see \cite{AGS, EM, BB, BM, GNT, AM, otto2001, VH, Villani2009_optimal} and many references therein. Several types of dynamics in these metric spaces have been studied in recent decades. First, gradient flows have been systematically studied in \cite{AGS, C1, O1}. Next, Hamiltonian flows with generalizations to differential games have been investigated by \cite{MFGC, LL,MRPP}. A particular type of Hamiltonian flows, namely Schr{\"o}dinger bridge systems, and their mean-field generalizations, are widely studied in \cite{BCGL, CGP, DV, CCT,CL, LeL}. Compared to previous works, we focus on conservative-dissipative equations, which are non--gradient flow systems. In particular, we connect the conservation law equations with generalized optimal transport variational structures. Concretely, we study equation \eqref{PDE} associated with a least-square type control problem in the designed metric space. We remark that the control problem in \eqref{main_variation} connects with, but is different from, the ones in \cite{YB, BM}. In detail, \cite{BM} designed variational problems in term of entropy functional $-\mathcal{F}(u)=\mathcal{G}(u)=\int_\Omega G(u)dx$, which can solve the initial value conservation laws. And \cite{BM} also enforces $v=0$ in the control problem \eqref{main_variation}. In contrast, we control ``kinetic energy'' in the modified optimal transport space generated by the entropy--entropy flux pairs. Following this approach, the PDE pair \eqref{DS} can be used to control conservation laws. 

The paper is organized below. In section \ref{section2}, we provide two motivating examples of variational problem \eqref{main_variation}. In section \ref{section3} and \ref{section4}, we present the main result of this paper. We first present the variational formulation of equation \eqref{PDE} in modified optimal transport space. We next derive a variational problem and a pair of PDEs for conservation laws. In section \ref{primal-dual}, we design primal-dual algorithms to numerically solve the optimal control problems of conservation laws.

\section{Motivation}\label{section2}
In this section, we provide two examples of conservative-dissipative equations \eqref{PDE}. In these examples, we demonstrate variational problems \eqref{main_variation} and PDE pairs \eqref{DS} with their physical and modeling explanations.

\subsection{Viscous Burgers' equation}
Consider a one dimensional viscous Burgers' equation
\begin{equation*}
\partial_t u(t,x)+\partial_x\frac{u(t,x)^2}{2}=\beta\partial_{xx} u(t,x).
\end{equation*}
Here $\partial_x$, $\partial_{xx}$ are the first and the second derivatives w.r.t. $x$, and the unknown variable is $u\colon \mathbb{R}_+\times \Omega\rightarrow \mathbb{R}$. In this case, the conservative and dissipative operators satisfy
\begin{equation*}
\mathcal{B}(u)=\partial_x(\frac{u^2}{2}), \qquad\mathcal{C}(u)=\partial_{xx} u.
\end{equation*}
Define a function $G\colon \mathbb{R}^1\rightarrow\mathbb{R}^1$ by
\begin{equation*}
G(u)=\frac{u^2}{2}, \quad G'(u)=u.
\end{equation*}
Then assumption 1 is satisfied since
\begin{equation*}
\int_\Omega  G'(u)\cdot\mathcal{B}(u) dx=\int_\Omega \partial_x(\frac{u^2}{2})\cdot u dx=\int_\Omega u^2\cdot\partial_x u dx =\int_\Omega \partial_x(\frac{u^3}{3})dx=0.
\end{equation*}
Denote a ``symmetric nonnegative definite operator" by 
\begin{equation*}
L_{\mathcal{C}}(u)=-\partial_{xx}.
\end{equation*}
 Assumption 2 is satisfied since
\begin{equation*}
\int_\Omega G'(u)\cdot\mathcal{C}(u)dx=\int_\Omega u \cdot \partial_{xx} u dx=-\int_\Omega |\partial_x u|^2 dx,\end{equation*}
where the second equality is from the integration by parts formula.

\noindent{\em Formulations:} Variational problem \eqref{main_variation} forms an optimal control problem for viscous Burgers' equation. Consider 
\begin{subequations}\label{main_variation_example_1}
\begin{equation}
\inf_{u, v, u_1}~\int_0^1\Big[\int_\Omega \frac{1}{2}|v(t,x)|^2 dx-\mathcal{F}(u)\Big]dt+\mathcal{H}(u_1),
\end{equation}
such that
\begin{equation}
\partial_t u(t,x)+\partial_x(\frac{u(t,x)^2}{2})+\partial_{x}v(t,x)=\beta \partial_{xx} u(t,x),\quad u(0,x)=u_0(x).
\end{equation}
\end{subequations}
Here, the minimizer system of variational problem \eqref{main_variation_example_1} satisfies a pair of PDEs: $v(t,x)=\nabla\Phi(t,x)$ and 
\begin{equation*}
\left\{\begin{aligned}
&\partial_t u(t,x)+\partial_x(\frac{u(t,x)^2}{2})+\partial_{xx}\Phi(t,x)=\beta \partial_{xx} u(t,x),\\
&\partial_t\Phi(t,x)+(u(t,x), \partial_x \Phi(t,x))+\frac{\delta}{\delta u}\mathcal{F}(u)(t,x)=-\beta\partial_{xx} \Phi(t,x).
\end{aligned}\right.
\end{equation*}

\noindent{\em Velocity control.} 
The unknown variable $u$ in Burgers' equation describes the velocity filed of the fluid over time. In the variational problem \eqref{main_variation_example_1}, we design a potential field $\Phi$ to control the evolution of velocity field $u$. Here, the background dynamics of $u$ is the classical initial value viscous Burgers' equation. The designed variational problem is to control a velocity field under suitable running and terminal costs.

\subsection{Traffic flow equation}\label{sec:traffic_flow}
Consider a one dimensional traffic flow equation
\begin{equation*}
\partial_t u(t,x)+\nabla\cdot\big(u(t,x)(1-u(t,x))\big)=\beta\Delta u(t,x).
\end{equation*}
Here $\nabla\cdot$, $\Delta$ are divergence, Laplacian operators w.r.t $x$, respectively, and the unknown variable is  $u\colon \mathbb{R}_+\times \Omega\rightarrow \mathbb{R}_+$. In this case, the conservative and the dissipative operators satisfy
\begin{equation*}
\mathcal{B}(u)=\nabla\cdot(u(1-u)), \qquad\mathcal{C}(u)=\Delta u.
\end{equation*}
Define a function $G\colon \mathbb{R}_+^1\rightarrow\mathbb{R}_+^1$ by
\begin{equation*}
G(u)=u\log u-u,\quad G'(u)=\log u.
\end{equation*}
Then assumption 1 is satisfied since
\begin{equation*}
\int_\Omega G'(u)\cdot \mathcal{B}(u) dx=\int_\Omega \log u\cdot \nabla\cdot(u (1-u)) dx=\int_\Omega \nabla\cdot\Psi(u)dx=0,
\end{equation*}
where $\Psi(u)=\int_0^u (1-2z)\log zdz$.
Denote a ``symmetric nonnegative definite operator" by 
\begin{equation*}
L_{\mathcal{C}}(u)=-\nabla\cdot(u \nabla).
\end{equation*}
Assumption 2 is satisfied since
\begin{equation*}
\begin{split}
\int_\Omega G'(u)\cdot \mathcal{C}(u) dx
=&\int_\Omega \log u \cdot \Delta u dx\\
=&\int_\Omega (\nabla \log u, \nabla u)dx\\
=&-\int_\Omega \|\nabla \log  u\|^2 u dx\\
=&-\int_\Omega \big(\log u, -\nabla\cdot(u\nabla \log u)\big) dx,
\end{split}
\end{equation*}
where the second equality is from the integration by parts formula and the third equality holds by the fact that $\frac{\nabla u}{u}=\nabla \log u$, i.e., $\nabla u=u\nabla\log u$. 

\noindent{\em Formulations:} Variational problem \eqref{main_variation} forms an optimal control problem for traffic flows. In detail, consider 
\begin{subequations}\label{main_variation_example_2}
\begin{equation}
\inf_{u,v,v_1}~\int_0^1\Big[\int_\Omega \frac{1}{2}\|v(t,x)\|^2u(t,x) dx-\mathcal{F}(u)\Big]dt+\mathcal{H}(u_1),
\end{equation}
such that
\begin{equation*}
0\leq u(t,x)\leq 1, \quad \textrm{for all $t\in [0,1]$},
\end{equation*}
and
\begin{equation}
\partial_t u(t,x)+\nabla\cdot\big(u(t,x)(1-u(t,x))\big)+\nabla\cdot\big(u(t,x)v(t,x)\big)=\beta \Delta u(t,x),\quad u(0,x)=u_0(x).
\end{equation}
\end{subequations}
Here, the minimizer system of variational problem \eqref{main_variation_example_1} satisfies a pair of PDEs. When $u\in (0,1)$, there exists a scalar function $\Phi$, such that $v(t,x)=\nabla\Phi(t,x)$ and 
\begin{equation*}
\left\{\begin{aligned}
&\partial_t u(t,x)+\nabla\cdot\big(u(t,x)(1-u(t,x))\big)+\nabla\cdot\big(u(t,x)\nabla\Phi(t,x)\big)=\beta \Delta u(t,x),\\
&\partial_t\Phi(t,x)+\big(1-2u(t,x), \nabla \Phi(t,x)\big)+\frac{1}{2}\|\nabla\Phi(t,x)\|^2+\frac{\delta}{\delta u(t,x)}\mathcal{F}(u)=-\beta\Delta \Phi(t,x).
\end{aligned}\right.
\end{equation*}
\noindent{\em Position control.} The unknown variable $u$ in traffic flows represents the density function of cars (particles) in a given spatial domain. Here, the background dynamics of $u$ is the classical traffic flow. The control variable is the velocity for enforcing each car's velocity in addition to its background traffic flow dynamics. The goal is to control the ``total enforced kinetic energy'' of all cars, in which individual cars can determine their velocities through both noises and traffic flow interactions.

We shall demonstrate that variational problems \eqref{main_variation} can be formulated in term of general conservation law equations associated with entropy-entropy flux pairs. 

\section{Entropy-entropy flux and flux--gradient flows}\label{section3}
In this section, we first recall Lax's entropy-entropy flux pairs for conservation law equations; see \cite{Evans, Lax}. Here the entropy-entropy flux pair is used to construct a Lyapunov functional for PDE \eqref{PDE}. Using this Lyapunov functional with the dissipative operator, we next review and formulate both metric spaces and gradient flows; see \cite{AGS, C1,AM}. Combining these facts with flux functions, we name PDE \eqref{PDE} {\em flux--gradient flows} in metric spaces. In later on sections, we shall demonstrate that the flux--gradient flow formulation is useful in designing control problems of conservation laws. 
\subsection{Entropy-Entropy flux pairs and Lyapunov functionals}
 Consider 
\begin{equation*}
u\colon \mathbb{R}_+\times \Omega\rightarrow\mathbb{R}^1,\quad \mathcal{B}(u)=\nabla\cdot f(u),\quad \mathcal{C}(u)=\nabla\cdot(A(u)\nabla u).
\end{equation*}
In this case, equation \eqref{PDE} satisfies
\begin{equation}\label{cld}
\partial_t u(t,x)+\nabla\cdot f(u(t,x))=\beta \nabla\cdot(A(u)\nabla u),
\end{equation}
where $u\colon \mathbb{R}^n\rightarrow\mathbb{R}$ is a scalar function, $f=(f_1, \cdots, f_n)$ is a flux vector function, and $A=(A_{ij})_{1\leq i,j\leq n}\in\mathbb{R}^{n\times n}$ is a semi-positive definite matrix function. 
 If $\beta=0$, equation \eqref{cld} is a scalar conservation law equation
\begin{equation}\label{cl}
\partial_t u(t,x)+\nabla\cdot f(u(t,x))=0.
\end{equation}
\begin{definition}[Entropy-entropy flux pair condition \cite{Lax}]
We call $(G,\Psi)$ an entropy-entropy flux pair for the conservation law \eqref{cl} if there exists a convex function $G\colon \mathbb{R}\rightarrow\mathbb{R}$, and $\Psi\colon \mathbb{R}\rightarrow\mathbb{R}^n$, such that
\begin{equation*}
\Psi'(u)=f'(u)G'(u).
\end{equation*}
\end{definition}
\begin{remark}
We remark that the entropy-entropy flux condition is trivial for scalar conservation laws. Every differentiable convex function $G$ satisfies this condition. For simplicity of presentation, we only focus on the scalar case, and leave the study of conservation law systems in future work.  
\end{remark}
In fact, Lax's entropy--entropy flux pair introduces a class of functionals, which can be used as Lyapunov functionals for PDE \eqref{PDE}.  Denote
\begin{equation*}
\mathcal{G}(u)=\int_\Omega G(u)dx.
\end{equation*}

For assumption 1,
\begin{equation*}
\begin{split}
\int_\Omega G'(u)\cdot \mathcal{B}(u) dx=&\int_\Omega G'(u) \nabla\cdot f(u)dx\\
=&\int_\Omega G'(u) (f'(u), \nabla u) dx\\
=&\int_\Omega (\Psi'(u), \nabla u) dx\\
=&\int_\Omega \nabla\cdot\Psi(u)dx=0,
\end{split}
\end{equation*}
where we apply the fact that $f'(u)G'(u)=\Psi'(u)$ and $\int_\Omega \nabla\cdot\Psi(u)dx=0$ in the last equality. 

For assumption 2,
\begin{equation*}
\begin{split}
\int_\Omega G'(u)\cdot \mathcal{C}(u) dx
=& \int_\Omega G'(u)\nabla\cdot(A(u)\nabla u) dx\\
=&-\int_\Omega \big(\nabla G'(u), A(u)\nabla u\big)dx\\
=&-\int_\Omega\big(\nabla G'(u), A(u)G''(u)^{-1}\nabla G'(u)\big) dx,
\end{split}
\end{equation*}
where we apply $\nabla G'(u)=G''(u)\nabla u$ in the last equality. We observe a fact that $A(u)G''(u)^{-1}\succeq 0$, since $A$ is nonnegative definite and $G''(u)>0$.
Hence we know that 
\begin{equation*}
\begin{split}
\partial_t\mathcal{G}(u)=&\int_\Omega G'(u)\cdot\partial_t u dx\\
=&-\int_\Omega G'(u)\cdot \mathcal{B}(u)dx+\beta\int_\Omega G'(u)\cdot\mathcal{C}(u)dx\\
=&-\beta\int_\Omega\big(\nabla G'(u), A(u)G''(u)^{-1}\nabla G'(u)\big) dx\leq 0. 
\end{split}
\end{equation*}
This implies that $\mathcal{G}(u)$ is a Lyapunov functional for PDE \eqref{cld}.

\subsection{Metric spaces and gradient flows}
We next provide a condition to define a metric space for the unknown variable $u$. Here, the metric space connects  Lyapunov functionals with dissipative operators through gradient descent flows.  

\begin{definition}[Entropy-entropy flux-metric condition]\label{def2}
We call $(G,\Psi)$ an entropy-entropy flux pair-metric for equation \eqref{cld} if there exists a convex function $G\colon \mathbb{R}\rightarrow\mathbb{R}$, and $\Psi\colon \mathbb{R}\rightarrow\mathbb{R}^n$, such that
\begin{equation*}
\Psi'(u)=f'(u)G'(u),\quad \textrm{$A(u)G''(u)^{-1}$ is a semi positive definite symmetric matrix function}.
\end{equation*}
\end{definition}
Under the entropy-entropy flux-metric condition, assumption 2 implies a metric operator below.
Define the space of function $u$ by
 \begin{equation*}
 \mathcal{M}=\Big\{u\in C^{\infty}(\Omega)\colon \int_\Omega u(x)dx=\mathrm{constant}\Big\}.
 \end{equation*}
The tangent space of $\mathcal{M}(u)$ at point $u$ is defined by
\begin{equation*}
T_u\mathcal{M}=\Big\{\sigma\in C^{\infty}(\Omega)\colon \int_\Omega \sigma(x)dx=0\Big\}.
\end{equation*}
Denote an elliptic operator $L_\mathcal{C}\colon C^{\infty}(\Omega)\rightarrow C^{\infty}(\mathbb{R})$ by
\begin{equation*}
L_\mathcal{C}(u)=-\nabla\cdot(A(u)G''(u)^{-1}\nabla).
\end{equation*}
We are ready to define the metric inner product for the dissipation operator. 
\begin{definition}[Metric]\label{metrics}
  The inner product $\g(u)\colon
  {T_u}\mathcal{M}\times{T_u}\mathcal{M}\rightarrow\mathbb{R}$ is given below.
 \begin{equation*}\begin{split}
    \g(u)(\sigma_1, \sigma_2)  =&\int_\Omega (\Phi_1, L_{\mathcal{C}}(u)\Phi_2)dx\\
   =&-\int_\Omega \Phi_1 \nabla\cdot(A(u)G''(u)^{-1}\nabla\Phi_2) dx\\
  =&  \int_\Omega (\nabla\Phi_1, A(u)G''(u)^{-1}\nabla\Phi_2) dx\\
  =&\int_\Omega \sigma_1\Phi_2 dx=\int_\Omega \sigma_2\Phi_1dx,
  \end{split}
\end{equation*}
where $\Phi_i\in C^{\infty}(\Omega)$ satisfies \begin{equation*}
\sigma_i=-\nabla\cdot(A(u)G''(u)^{-1}\nabla\Phi_i),\quad i=1,2.
\end{equation*}
\end{definition}
From now on, we call $(\mathcal{M}, \g)$ the metric space. We next review and present the gradient decent flow in metric space $(\mathcal{M}, \g)$. Here the dissipative part of PDE \eqref{PDE} comes from the gradient flow of the proposed Lyapunov (entropy) functional.
\begin{proposition}[Gradient flow]\label{gd}
Given an energy functional $\mathcal{E}\colon \mathcal{M}\rightarrow\mathbb{R}$, the gradient flow of $\mathcal{F}$ in $(\mathcal{M}, \g)$ satisfies
\begin{equation*}
\partial_t u=\nabla\cdot(A(u)G''(u)^{-1}\nabla \frac{\delta}{\delta u}\mathcal{E}(u)).
\end{equation*}
If $\mathcal{E}(u)= \mathcal{G}(u)=\int_\Omega G(u)dx$, then the above gradient flow satisfies
\begin{equation*}
\partial_t u=\nabla\cdot(A(u)\nabla u).
\end{equation*}
\end{proposition}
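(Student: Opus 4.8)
The plan is to follow the standard definition of a gradient flow on a formal Riemannian manifold and then unwind the metric $\g$ from Definition \ref{metrics}. Recall that for an energy $\mathcal{E}\colon\mathcal{M}\to\mathbb{R}$, its Riemannian gradient $\mathrm{grad}_\g\mathcal{E}(u)\in T_u\mathcal{M}$ is the unique tangent vector characterized by $\g(u)(\mathrm{grad}_\g\mathcal{E}(u),\sigma)=\int_\Omega \frac{\delta}{\delta u}\mathcal{E}(u)\,\sigma\,dx$ for every $\sigma\in T_u\mathcal{M}$, and the gradient descent flow is $\partial_t u=-\mathrm{grad}_\g\mathcal{E}(u)$. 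So the first task is to obtain a closed form for $\mathrm{grad}_\g\mathcal{E}(u)$, and the second is the routine specialization $\mathcal{E}=\mathcal{G}$.

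First I would write the candidate gradient through its potential: $\mathrm{grad}_\g\mathcal{E}(u)=-\nabla\cdot(A(u)G''(u)^{-1}\nabla\Phi)=L_\mathcal{C}(u)\Phi$ for some $\Phi\in C^\infty(\Omega)$, determined up to an additive constant. This representation is legitimate precisely because, under the entropy--entropy flux-metric condition of Definition \ref{def2} together with periodic boundary conditions, $L_\mathcal{C}(u)$ is invertible from $C^\infty(\Omega)/\mathbb{R}$ onto the mean-zero smooth functions $T_u\mathcal{M}$; I will assume the nondegeneracy of $A(u)G''(u)^{-1}$ needed for this solvability, as is implicit in the very construction of $(\mathcal{M},\g)$. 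With this representation and the last line of Definition \ref{metrics}, namely $\g(u)(\sigma_1,\sigma_2)=\int_\Omega \sigma_2\Phi_1\,dx$ where $\Phi_1$ is the potential of $\sigma_1$, the defining identity becomes $\int_\Omega \sigma\,\Phi\,dx=\int_\Omega \frac{\delta}{\delta u}\mathcal{E}(u)\,\sigma\,dx$ for all mean-zero $\sigma$. Hence $\Phi$ and $\frac{\delta}{\delta u}\mathcal{E}(u)$ differ only by a constant, so $\mathrm{grad}_\g\mathcal{E}(u)=L_\mathcal{C}(u)\big(\tfrac{\delta}{\delta u}\mathcal{E}(u)\big)=-\nabla\cdot\big(A(u)G''(u)^{-1}\nabla\tfrac{\delta}{\delta u}\mathcal{E}(u)\big)$, which indeed lies in $T_u\mathcal{M}$ since it is a divergence integrating to zero on the torus. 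Consequently $\partial_t u=-\mathrm{grad}_\g\mathcal{E}(u)=\nabla\cdot\big(A(u)G''(u)^{-1}\nabla\tfrac{\delta}{\delta u}\mathcal{E}(u)\big)$, which is the first displayed equation.

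For the second claim I would specialize $\mathcal{E}=\mathcal{G}$. Then $\frac{\delta}{\delta u}\mathcal{G}(u)=G'(u)$, so by the chain rule $\nabla\frac{\delta}{\delta u}\mathcal{G}(u)=G''(u)\nabla u$, whence $A(u)G''(u)^{-1}\nabla\frac{\delta}{\delta u}\mathcal{G}(u)=A(u)\nabla u$, and the flow reduces to $\partial_t u=\nabla\cdot(A(u)\nabla u)$, that is, the dissipative operator $\mathcal{C}(u)=\nabla\cdot(A(u)\nabla u)$ appearing in \eqref{cld}.

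The only genuine obstacle is the functional-analytic point in the middle step: one must know that the map $\sigma\mapsto\Phi$, i.e.\ the inverse of the weighted elliptic operator $L_\mathcal{C}(u)$, is well defined on $T_u\mathcal{M}$, which can fail where $A(u)$ degenerates. In the smooth, nondegenerate regime this is classical elliptic theory on a compact manifold; in the degenerate regime (for instance $A(u)\propto u$ vanishing at $u=0$ in the traffic example) the metric $\g$ is only formal, and the identities are to be read as holding on the set where $u$ stays in the interior of its range, exactly as in the Otto-calculus framework of the references cited above. Everything else is integration by parts on the torus and the chain rule.
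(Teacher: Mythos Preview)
Your argument is correct and follows essentially the same route as the paper: both identify $\mathrm{grad}_\g\mathcal{E}(u)=L_\mathcal{C}(u)\big(\tfrac{\delta}{\delta u}\mathcal{E}(u)\big)$ from the defining identity of the Riemannian gradient together with the potential representation in Definition~\ref{metrics}, and then specialize via $\nabla G'(u)=G''(u)\nabla u$. Your additional remarks on the constant ambiguity in $\Phi$ and on the invertibility of $L_\mathcal{C}(u)$ in the degenerate regime are points the paper leaves implicit, so if anything your write-up is slightly more careful.
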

\begin{proof}
The derivation of gradient flow follows from the definition. For the completeness of this paper, we still present the proof here.
The gradient operator $\mathrm{grad}\mathcal{E}\in T_u\mathcal{M}$ is defined by
\begin{equation*}
\g(u)\big(\mathrm{grad}\mathcal{E}(u),\sigma\big)=\int_\Omega (\frac{\delta}{\delta u}\mathcal{E}(u),\sigma)dx,
\end{equation*}
for any $\sigma\in T_u\mathcal{M}$. Let $\sigma=L_{\mathcal{C}}(u)(\Phi)=-\nabla\cdot(A(u)G''(u)^{-1}\nabla\Phi)$, the above definition forms
\begin{equation*}
\begin{split}
\g(u)\big(\mathrm{grad}\mathcal{E}(u),\sigma\big)=&\int_\Omega (\mathrm{grad}\mathcal{E}(u),\Phi)dx\\
=&\int_\Omega (\frac{\delta}{\delta u}\mathcal{E}(u), L_\mathcal{C}(u)(\Phi))dx\\
=&\int_\Omega (L_\mathcal{C}(u)(\frac{\delta}{\delta u}\mathcal{E}(u)), \Phi)dx.
\end{split}
\end{equation*}
Since the above equality holds for any smooth function $\Phi$, we let
\begin{equation*}
\mathrm{grad}\mathcal{E}(u)=L_{\mathcal{C}}(u)\big(\frac{\delta}{\delta u}\mathcal{E}(u)\big)=-\nabla\cdot\Big(A(u)G''(u)^{-1}\nabla\frac{\delta}{\delta u}\mathcal{E}(u)\Big).
\end{equation*}
Hence the gradient decent flow satisfies
\begin{equation*}
\partial_tu=-\mathrm{grad}\mathcal{E}(u)=\nabla\cdot\Big(A(u)G''(u)^{-1}\nabla\frac{\delta}{\delta u}\mathcal{E}(u)\Big).
\end{equation*}
If $\mathcal{E}(u)=\mathcal{G}(u)$, then 
\begin{equation*}
\begin{split}
\partial_t u=&\nabla\cdot(A(u)G''(u)^{-1}\nabla \frac{\delta}{\delta u}\mathcal{G}(u))\\
=&\nabla\cdot(A(u)G''(u)^{-1}\nabla G'(u))\\
=&\nabla\cdot(A(u)G''(u)^{-1}G''(u)\nabla u)\\
=&\nabla\cdot(A(u)\nabla u).
\end{split}
\end{equation*}
\end{proof}
\begin{remark}
An example of metric $g$ is the Wasserstein-2 metric, which has been widely studied in optimal transport literature \cite{AGS, Villani2009_optimal}. In other words, let $G(u)=u\log u-u$, $A(u)=\mathbb{I}$, then $L_\mathcal{C}(u)=-\nabla \cdot(u\nabla)$. In this case, the heat equation is the gradient flow of entropy functional $\int_\Omega u\log u- u dx$ in Wasserstein-2 space. Recently, generalized optimal transport metrics and nonlinear diffusions have been widely studied in \cite{C1, O1, AM1}. 
\end{remark}
\subsection{Flux--gradient flows}
In summary, we illustrate the relation among entropy-entropy flux pairs, gradient flows and PDE \eqref{cld}. On the one hand, the entropy-entropy flux pairs introduce a Lyapunov functional, along which the entropy-entropy flux flow is non positive. In addition, $\frac{dG}{dt}+\nabla\cdot\Psi\leq 0$. This entropy condition \cite{Lax} picks out the unique physical weak solution for inviscid conservation law \eqref{cl}. On the other hand, both Lyapunov functional and dissipative operator define a metric space, under which the dissipative operator forms the gradient flow. These facts imply a formulation for PDE \eqref{cld}. We call it {\em flux--gradient flows}.

In detail, equation \eqref{cld} can be written below. 
\begin{equation}\label{pgdd}
\partial_t u+\nabla\cdot f(u)=\beta\nabla\cdot(A(u)G''(u)^{-1}\nabla \frac{\delta}{\delta u}\mathcal{G}(u)),
\end{equation}
where the flux function $f$ satisfies 
\begin{equation*}
\int_\Omega f(u)\cdot\nabla \frac{\delta}{\delta u}\mathcal{G}(u) dx=0.
\end{equation*}

The above formulation of the PDE \eqref{cld} is a gradient flow equation added with the flux function. We can consider its general formulation. We keep the metric operator invariant and replace the Lyapunov functional $\mathcal{G}(u)$ by a general energy functional $\mathcal{E}(u)$.

\begin{definition}[Flux--gradient flow]\label{def5}
Given an energy functional $\mathcal{E}\colon\mathcal{M}\rightarrow\mathbb{R}$, consider a class of PDE  
\begin{equation}\label{pgd}
\partial_t u+\nabla\cdot(f_1(x,u))=\beta\nabla\cdot(A(u)G''(u)^{-1}\nabla \frac{\delta}{\delta u}\mathcal{E}(u)),
\end{equation}
where $f_1\colon \Omega\times\mathbb{R}^1\rightarrow\mathbb{R}^n$ is a flux function satisfying 
\begin{equation}\label{pgdcond}
\int_\Omega f_1(x,u)\cdot\nabla\frac{\delta}{\delta u(x)}\mathcal{E}(u)dx=0.
\end{equation}
If $\mathcal{E}(u)= \mathcal{G}(u)= \int_\Omega G(u)dx$ and $f_1(x,u)=f(u)$, then equation \eqref{pgd} forms PDE \eqref{cld}.
\end{definition}
Equations in formulation \eqref{pgd} provide a class of conservative--dissipative dynamics.
\begin{proposition}[Entropy-entropy flux-production]
Energy functional $\mathcal{E}(u)$ is a Lyapunov functional for PDE \eqref{pgd}. In other words, the following dissipation result holds.
Suppose $u(t,x)$ is the solution of equation \eqref{pgd}, then
\begin{equation*}
\frac{d}{dt}\mathcal{E}(u(t,\cdot))=-\beta\mathcal{I}_{\mathcal{E}}(u(t,\cdot))\leq 0,
\end{equation*}
where the functional $\mathcal{I}_{\mathcal{E}}\colon \mathcal{M}\rightarrow\mathbb{R}_+$ is defined by
\begin{equation}\label{MF}
\mathcal{I}_{\mathcal{E}}(u)=\int_\Omega \big(\nabla \frac{\delta}{\delta u}\mathcal{E}(u), A(u)G''(u)^{-1}\nabla \frac{\delta}{\delta u}\mathcal{E}(u)\big)dx.
\end{equation}
\end{proposition}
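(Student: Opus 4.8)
The plan is to compute $\frac{d}{dt}\mathcal{E}(u(t,\cdot))$ directly using the chain rule for the $L^2$ first variation, and then substitute the PDE \eqref{pgd} for $\partial_t u$, splitting the result into a flux term and a diffusion term. First I would write
\begin{equation*}
\frac{d}{dt}\mathcal{E}(u(t,\cdot))=\int_\Omega \frac{\delta}{\delta u}\mathcal{E}(u)\,\partial_t u\,dx,
\end{equation*}
which is the definition of the first variation (valid for smooth $u$ with the periodic boundary conditions and the constant-mass constraint, so that $\frac{\delta}{\delta u}\mathcal{E}(u)\in$ the appropriate space). Then substitute $\partial_t u=-\nabla\cdot(f_1(x,u))+\beta\nabla\cdot\!\big(A(u)G''(u)^{-1}\nabla\frac{\delta}{\delta u}\mathcal{E}(u)\big)$ from \eqref{pgd}.

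Next I would treat the two resulting integrals separately. The flux term is
\begin{equation*}
-\int_\Omega \frac{\delta}{\delta u}\mathcal{E}(u)\,\nabla\cdot(f_1(x,u))\,dx=\int_\Omega \nabla\frac{\delta}{\delta u}\mathcal{E}(u)\cdot f_1(x,u)\,dx,
\end{equation*}
by integration by parts (no boundary contribution by periodicity), and this vanishes by exactly the structural hypothesis \eqref{pgdcond} defining a flux--gradient flow. The diffusion term is
\begin{equation*}
\beta\int_\Omega \frac{\delta}{\delta u}\mathcal{E}(u)\,\nabla\cdot\!\Big(A(u)G''(u)^{-1}\nabla\frac{\delta}{\delta u}\mathcal{E}(u)\Big)dx=-\beta\int_\Omega\Big(\nabla\frac{\delta}{\delta u}\mathcal{E}(u),\,A(u)G''(u)^{-1}\nabla\frac{\delta}{\delta u}\mathcal{E}(u)\Big)dx,
\end{equation*}
again by integration by parts, which is precisely $-\beta\,\mathcal{I}_{\mathcal{E}}(u)$ as defined in \eqref{MF}. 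Adding the two pieces gives $\frac{d}{dt}\mathcal{E}(u(t,\cdot))=-\beta\,\mathcal{I}_{\mathcal{E}}(u(t,\cdot))$.

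Finally, nonnegativity of $\mathcal{I}_{\mathcal{E}}(u)$ follows from the entropy--entropy flux-metric condition (Definition \ref{def2}): $A(u)G''(u)^{-1}$ is a symmetric nonnegative definite matrix function, so the quadratic form $(\xi, A(u)G''(u)^{-1}\xi)\ge 0$ pointwise with $\xi=\nabla\frac{\delta}{\delta u}\mathcal{E}(u)$, and integrating preserves the sign; hence $\mathcal{I}_{\mathcal{E}}(u)\ge 0$ and $\frac{d}{dt}\mathcal{E}(u(t,\cdot))\le 0$. The computation is essentially a repeat of the Lyapunov calculation already carried out for $\mathcal{G}$ in the preceding subsection, with $\mathcal{G}$ replaced by $\mathcal{E}$ and the flux cancellation now coming from the abstract orthogonality condition \eqref{pgdcond} rather than from the entropy--entropy flux identity $\Psi'=f'G'$. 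The only point requiring any care — and the mild ``obstacle'' — is justifying the integration by parts and the chain-rule identity $\frac{d}{dt}\mathcal{E}=\int_\Omega \frac{\delta\mathcal{E}}{\delta u}\partial_t u\,dx$ at the required level of regularity; since the paper works formally with smooth solutions on a torus, this is immediate, and I would simply note it.
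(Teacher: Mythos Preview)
Your proposal is correct and follows essentially the same approach as the paper's own proof: compute $\frac{d}{dt}\mathcal{E}(u)$ via the chain rule, substitute the PDE, integrate by parts to split into a flux term (killed by condition \eqref{pgdcond}) and a diffusion term (yielding $-\beta\mathcal{I}_{\mathcal{E}}(u)$), and conclude nonpositivity from the semi-definiteness of $A(u)G''(u)^{-1}$. Your write-up is in fact slightly more explicit than the paper's about the role of Definition \ref{def2} in the sign, but the argument is the same.
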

\begin{proof}
The proof follows directly from Definition \ref{def5}. In other words, 
\begin{equation*}
\begin{split}
\frac{d}{dt}\mathcal{E}(u)=&\int_\Omega \frac{\delta}{\delta u}\mathcal{E}(u)\cdot\partial_t u dx\\
=&\int_\Omega \Big[-\frac{\delta}{\delta u}\mathcal{E}(u) \nabla\cdot f_1(x,u)+\beta\frac{\delta}{\delta u} {\mathcal{E}(u)}\nabla\cdot(A(u)G''(u)^{-1}\nabla \frac{\delta}{\delta u}\mathcal{E}(u))\Big]dx\\
=&\int_\Omega (f_1(x,u), \nabla\frac{\delta}{\delta u}\mathcal{E}(u))dx-\beta\int_\Omega \big(\nabla \frac{\delta}{\delta u}\mathcal{E}(u), A(u)G''(u)^{-1}\nabla \frac{\delta}{\delta u}\mathcal{E}(u)\big)dx\\
=&-\beta\mathcal{I}_{\mathcal{E}}(u), 
\end{split}
\end{equation*}
where the third equality applies the integration by parts formula and the last equality uses condition \eqref{pgdcond}. 
\end{proof}
\begin{remark}
We remark that if $\mathcal{E}(u)=\mathcal{G}(u)=\int_\Omega (u\log u-u)dx$ and $A(u)=\mathbb{I}$, then the functional
\begin{equation*}
\mathcal{I}_{\mathcal{E}}(u)=\int_\Omega\|\nabla\log u\|^2 u dx=\int_\Omega \frac{\|\nabla u(x)\|^2}{u(x)}dx,
\end{equation*}
is known as the Fisher information functional. A known fact is that the dissipation of Lyapunov/entropy functional $\mathcal{G}(u)=\int_\Omega u\log u-u dx$ along the heat flow equals to the negative Fisher information functional. This fact is called the de Bruijn equality. Here, the de-Bruijn type equalities hold naturally in conservation laws with entropy-entropy flux pairs. In future works, we shall study the dynamical effect of flux function in metric spaces; see related techniques developed in \cite{LiG1}. 
\end{remark}
\begin{remark}
We remark that equation \eqref{pgd} are generalized variational formulations for flux--gradient flows in metric spaces. They have potential applications in designing Markov-Chain-Monte-Carlo algorithms, and deriving the neural network variational algorithms for conservation laws; see \cite{LOL}. We leave the detailed studies of these areas in future works. 
\end{remark}

\section{Controlling Conservation laws}\label{section4}
In this section, we present the main results of this paper. We study the variational problems for conservation laws \eqref{cld}. From now on, we assume that the entropy-entropy flux-metric condition in Definition \ref{def2} holds. 

We first design an optimal control problem over flux--gradient flows in a metric space. 
\begin{definition}[Optimal control of conservation laws]
\begin{subequations}\label{MFC}
Given smooth functionals $\mathcal{F}$, $\mathcal{H}\colon \mathcal{M}\rightarrow\mathbb{R}$, consider a variational problem
\begin{equation}\label{MFC1}
\inf_{u, v, u_1}~\int_0^1\Big[\int_\Omega \frac{1}{2}\big(v, A(u)G''(u)^{-1}v\big)dx-\mathcal{F}(u)\Big]dt+\mathcal{H}(u_1),
\end{equation}
where the infimum is taken among variables $v\colon [0,1]\times \Omega\rightarrow\mathbb{R}^n$, $u\colon [0,1]\times\Omega\rightarrow\mathbb{R}$, and $u_1\colon\Omega\rightarrow\mathbb{R}$ satisfying 
\begin{equation}\label{MFC2}
\begin{split}
\partial_t u+\nabla\cdot f(u)+\nabla\cdot(A(u)G''(u)^{-1}v)=\beta \nabla\cdot(A(u)\nabla u),\quad u(0,x)=u_0(x).
\end{split}
\end{equation}
\end{subequations}
\end{definition}
Here the equation \eqref{MFC2} is a control dynamic for the conservation law \eqref{cld}. The objective functional is an enforced ``kinetic energy'' minus a ``potential energy'' in the metric space $(\mathcal{M}, \g)$. If the control variable $v(t,x)=0$ for all $t=[0,1]$, $x\in \Omega$, then dynamics \eqref{MFC2} becomes the original conservation law equation \eqref{cld}. 
\begin{remark}
We notice that variational problem \eqref{MFC} is a generalized dynamical optimal transport problem. In other words, if $u\colon[0,1]\times\Omega\rightarrow\mathbb{R}_+$, $A(u)=\mathbb{I}$, $G(u)=u\log u-u$, $\beta=0$, $f=0$, $\mathcal{F}(u)=0$ and $u_1$ is a fixed function with $\int_\Omega u_0 dx=\int_\Omega u_1 dx$, then problem \eqref{MFC} forms 
  \begin{equation*}
\inf_{u, v}~\Big\{\int_0^1\int_\Omega \frac{1}{2}\|v\|^2udxdt\colon \partial_t u+\nabla\cdot(uv)=0,\quad u(0,x)=u_0(x),\quad u(1,x)=u_1(x)\Big\}.
\end{equation*}
The above minimization is known as Benamou-Brenier's formula \cite{BB} studied in classical optimal transport problems. Here the variational problem \eqref{MFC} generalizes the dynamical optimal transport, which contains the inverse of the Hessian operator of entropy functionals to model the ``kinetic energy''. In particular, we formulate the conservation laws in the constraint set.  
\end{remark}

We next derive critical point systems of variational problem \eqref{MFC}. They are Hamiltonian flows in $(\mathcal{M}, \g)$ associated with conservation laws.   
\begin{proposition}[Hamiltonian flows of conservation laws]\label{MFH}
The critical point system of variational problem \eqref{MFC} is given below. There exists a function $\Phi\colon [0,1]\times\Omega\rightarrow\mathbb{R}$, such that 
\begin{equation*}
v(t,x)=\nabla\Phi(t,x),
\end{equation*}
and
\begin{equation}\label{MFCeq}
\left\{\begin{aligned}
&\partial_t u+\nabla\cdot f(u)+\nabla\cdot(A(u)G''(u)^{-1}\nabla\Phi)=\beta \nabla\cdot(A(u)\nabla u),\\
&\partial_t\Phi+(\nabla\Phi, f'(u))+\frac{1}{2}(\nabla\Phi, (A(u)G''(u)^{-1})'\nabla\Phi)+\frac{\delta}{\delta u}\mathcal{F}(u)=-\beta \nabla\cdot(A(u)\nabla\Phi)+\beta(\nabla\Phi, A'(u)\nabla u).
\end{aligned}\right.
\end{equation}
Here $'$ represents the derivative w.r.t. variable $u$. The initial and terminal time conditions satisfy
\begin{equation*}
u(0,x)=u_0(x), \quad \frac{\delta}{\delta u_1}\mathcal{H}(u_1)+\Phi(1,x)=0.
\end{equation*}
\end{proposition}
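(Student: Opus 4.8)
\medskip
\noindent\emph{Proof plan.}
The plan is to read \eqref{MFC} as a PDE-constrained optimization problem and compute its first-order optimality system by the Lagrange multiplier method, and then simplify using the quadratic structure of the running cost together with the substitution dictated by the $v$-variation.

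First I would attach a multiplier $\Phi\colon[0,1]\times\Omega\to\mathbb{R}$ to the constraint \eqref{MFC2} and form the Lagrangian
\begin{equation*}
\begin{split}
\mathcal{L}(u,v,u_1,\Phi)=\ &\int_0^1\!\Big[\int_\Omega \tfrac12\big(v,A(u)G''(u)^{-1}v\big)\,dx-\mathcal{F}(u)\Big]dt+\mathcal{H}(u_1)\\
&+\int_0^1\!\!\int_\Omega \Phi\Big(\partial_t u+\nabla\cdot f(u)+\nabla\cdot\big(A(u)G''(u)^{-1}v\big)-\beta\nabla\cdot\big(A(u)\nabla u\big)\Big)dx\,dt.
\end{split}
\end{equation*}
Then I would integrate by parts: in $t$, writing $\int_0^1\!\int_\Omega\Phi\,\partial_t u=-\int_0^1\!\int_\Omega u\,\partial_t\Phi+\int_\Omega\Phi(1,\cdot)u_1-\int_\Omega\Phi(0,\cdot)u_0$, where the last term is frozen since $u(0,\cdot)=u_0$; and in $x$, using the periodic boundary conditions to transfer each $\nabla\cdot$ onto $\Phi$, so the constraint contribution becomes $\int_0^1\!\int_\Omega\big[-(\nabla\Phi,f(u))-(\nabla\Phi,A(u)G''(u)^{-1}v)+\beta(\nabla\Phi,A(u)\nabla u)\big]dx\,dt$ plus the boundary term in $t$.

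Next I would set the variations to zero one variable at a time. Varying $v$: the running cost contributes $A(u)G''(u)^{-1}v$ and the constraint contributes $-A(u)G''(u)^{-1}\nabla\Phi$ (using that $A(u)G''(u)^{-1}$ is symmetric), and since this matrix is invertible this forces $v=\nabla\Phi$. Varying $\Phi$ simply returns \eqref{MFC2}, the first equation of \eqref{MFCeq}. Varying the free terminal state $u_1$ gives $\frac{\delta}{\delta u_1}\mathcal{H}(u_1)+\Phi(1,x)=0$, the asserted terminal condition. Finally, varying $u$ for $t\in(0,1)$, I would differentiate termwise: $-\partial_t\Phi$ from $u\,\partial_t\Phi$; $-(\nabla\Phi,f'(u))$ from $(\nabla\Phi,f(u))$; $\tfrac12\big(v,(A(u)G''(u)^{-1})'v\big)-\big(\nabla\Phi,(A(u)G''(u)^{-1})'v\big)$ from the two $v$-terms; from the diffusion term $\beta(\nabla\Phi,A(u)\nabla u)$ both $\beta(\nabla\Phi,A'(u)\nabla u)$ (varying $A(u)$) and $-\beta\nabla\cdot(A(u)\nabla\Phi)$ (varying $\nabla u$, then one more integration by parts, using $A=A^{\ts}$); and $-\frac{\delta}{\delta u}\mathcal{F}(u)$. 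Substituting $v=\nabla\Phi$ collapses the two $v$-terms to $-\tfrac12\big(\nabla\Phi,(A(u)G''(u)^{-1})'\nabla\Phi\big)$, and rearranging the resulting identity reproduces the second equation of \eqref{MFCeq}.

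I expect the main obstacle to be the $u$-variation of the terms in which $u$ enters in several places at once: the matrix field $A(u)G''(u)^{-1}$, whose $u$-derivative must be computed by the chain rule and kept consistent with the $'$ notation, and especially the diffusion term $\beta\nabla\cdot(A(u)\nabla u)$, where one has to split the variation of $A(u)$ from that of $\nabla u$, carry out the extra integration by parts, and track the symmetry of $A$ so that the two diffusion terms on the right of \eqref{MFCeq} come out with the correct signs; the sign bookkeeping across all integrations by parts is the part most prone to error. It is worth noting that this derivation is formal: it yields the necessary first-order conditions for \eqref{MFC} under sufficient smoothness, and it uses implicitly that $u$ stays in the range where $G''(u)>0$, so that $A(u)G''(u)^{-1}$ is invertible and the $v$-variation can indeed be solved for $v$.
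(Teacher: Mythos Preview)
Your proposal is correct and follows the same Lagrange-multiplier strategy as the paper. The one notable difference is that the paper first passes to the ``momentum'' variable $m=A(u)G''(u)^{-1}v=:V(u)v$ (the Benamou--Brenier substitution), so that the running cost becomes $\tfrac12(m,V(u)^{-1}m)$ and the constraint becomes $\partial_t u+\nabla\cdot f(u)+\nabla\cdot m=\beta\nabla\cdot(A(u)\nabla u)$, now \emph{linear} in $m$. This buys a cleaner $u$-variation: the constraint term $\nabla\cdot m$ no longer carries any $u$-dependence, and the only contribution from the kinetic part is $-\tfrac12\big(m,V(u)^{-1}V'(u)V(u)^{-1}m\big)$, which after $V(u)^{-1}m=\nabla\Phi$ is exactly your $-\tfrac12(\nabla\Phi,(A(u)G''(u)^{-1})'\nabla\Phi)$. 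Your direct route in $(u,v)$ produces instead the pair $\tfrac12(v,V'(u)v)-(\nabla\Phi,V'(u)v)$, which you then collapse by substituting $v=\nabla\Phi$; this is equally valid and arrives at the same adjoint equation, at the cost of slightly more sign bookkeeping. Both derivations implicitly require $A(u)G''(u)^{-1}$ to be invertible (the paper writes $V(u)^{-1}$ openly), so your remark about needing $G''(u)>0$ and $A(u)$ nondegenerate is in line with the paper's own formal treatment.
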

\begin{proof}
We first rewrite the variables $(u, v)$ in variational formula \eqref{MFC} by $(u,m)$, where 
\begin{equation*}
m(t,x)=A(u(t,x))G''(u(t,x))^{-1}v(t,x)=V(u(t,x))v(t,x).
\end{equation*}
Here we denote $V(u(t,x))=A(u(t,x))G''(u(t,x))^{-1}$. In this case, the variational problem \eqref{MFC} forms
\begin{equation}\label{variation}
\begin{split}
&\inf_{m, u, u_1} \Big\{\int_0^1\Big[ \int_\Omega \frac{1}{2}(m, V(u)^{-1}m)dx-\mathcal{F}(u)\Big]dt+\mathcal{H}(u_1) \colon \\
&\qquad\quad\partial_t u+\nabla\cdot f(u)+ \nabla\cdot m=\beta \nabla\cdot(A(u)\nabla u), \quad \textrm{fixed $u_0$.}\Big\}.
\end{split}
\end{equation}
Denote the Lagrange multiplier of problem \eqref{variation} by $\Phi\colon [0,1]\times\Omega\rightarrow\mathbb{R}$. Consider the following saddle point problem 
\begin{equation*}
\begin{split}
\inf_{m,u,u_1}\sup_\Phi \quad \mathcal{L}(m, u, u_1, \Phi).
\end{split}
\end{equation*}
In the above formula, we have
\begin{equation*}
\begin{split}
\mathcal{L}(m,u,u_1,\Phi)= &\int_0^1 \int_\Omega \Big[\frac{1}{2}(m, V(u)^{-1}m)+\Phi\Big(\partial_t u+\nabla\cdot f(u)+ \nabla\cdot m-\beta \nabla\cdot(A(u)\nabla u)\Big)\Big] dxdt\\
&\quad-\int_0^1\mathcal{F}(u) dt+\mathcal{H}(u_1)\\
= &\int_0^1 \int_\Omega \Big[\frac{1}{2}(m, V(u)^{-1}m)+\Phi\Big(\nabla\cdot f(u)+ \nabla\cdot m-\beta \nabla\cdot(A(u)\nabla u)\Big)\Big] dxdt\\
&+\int_\Omega\big(\Phi(1,x)u_1(x)-\Phi(0,x)u_0(x)\big)dx-\int_0^1\int_\Omega \partial_t\Phi u dxdt-\int_0^1\mathcal{F}(u) dt+\mathcal{H}(u_1),
\end{split}
\end{equation*}
where we use the integration by parts formula w.r.t. $t$ in the second equality. In other words, 
\begin{equation*}
\int_0^1\int_\Omega\Phi\partial_tu dxdt=\int_\Omega \Phi(1,x)u(1,x) dx-\int_\Omega\Phi(0,x)u(0,x)dx-\int_0^1\int_\Omega \partial_t\Phi u dxdt. 
\end{equation*}
We next derive the critical point for the above saddle point problem. In other words, consider
\begin{equation*}
\left\{\begin{split}
&\frac{\delta}{\delta m}\mathcal{L}=0\\
 &\frac{\delta}{\delta u}\mathcal{L}=0\\
 &\frac{\delta}{\delta u_1}\mathcal{L}=0\\
 &\frac{\delta}{\delta \Phi}\mathcal{L}=0
\end{split}\right.\quad\Rightarrow\quad\left\{\begin{split}
&V(u)^{-1}m=\nabla\Phi,\\
 &-\frac{1}{2}(m, V(u)^{-1}V(u)'V(u)^{-1}m)-\frac{\delta}{\delta u}\mathcal{F}-\partial_t\Phi-(\nabla \Phi, f'(u))\\
 &\hspace{3.7cm}-\beta \nabla\cdot(A(u)\nabla\Phi)+\beta(\nabla \Phi, A'(u)\nabla u)=0,\\
 &\Phi_1+\frac{\delta}{\delta u_1}\mathcal{H}(u_1)=0,\\
 &\partial_t u+\nabla\cdot f(u)+ \nabla\cdot m-\beta \nabla\cdot(A(u)\nabla u)=0,
\end{split}\right.
\end{equation*}
where $\frac{\delta}{\delta m}$, $\frac{\delta}{\delta u}$, $\frac{\delta}{\delta u_1}$, $\frac{\delta}{\delta\Phi}$ are $L^2$ first variational derivatives w.r.t. functions $m$, $u$, $u_1$, $\Phi$, respectively. We thus derive the pair of PDEs \eqref{MFCeq} in $\mathcal{M}(\Omega)$.
\end{proof}
We next present the Hamiltonian formalism for the PDE system \eqref{MFCeq}. 
\begin{proposition}[Hamiltonian flows in metric space]\label{prop9}
PDE system \eqref{MFCeq} has the following Hamiltonian flow formulation. 
\begin{equation*}
\partial_tu=\frac{\delta}{\delta\Phi}\mathcal{H}_{\mathcal{G}}(u, \Phi),\quad \partial_t\Phi=-\frac{\delta}{\delta u}\mathcal{H}_{\mathcal{G}}(u, \Phi),
\end{equation*}
where we define the Hamiltonian functional $\mathcal{H}_{\mathcal{G}}\colon \mathcal{M}\times C^{\infty}(\Omega)\rightarrow\mathbb{R}$ by
\begin{equation}\label{eq:hamiltonian_functional}
\mathcal{H}_{\mathcal{G}}(u,\Phi)=\int_\Omega \Big[\frac{1}{2}(\nabla\Phi, A(u)G''(u)^{-1}\nabla\Phi)+(\nabla\Phi, f(u))-\beta (\nabla\Phi, A(u)\nabla u) \Big]dx+\mathcal{F}(u).
\end{equation}
In other words, the Hamiltonian functional $\mathcal{H}_{\mathcal{G}}(u, \Phi)$ is conserved along dynamics \eqref{MFCeq}. Suppose $(u, \Phi)$ are solutions of equation \eqref{MFCeq}, then
\begin{equation*}
\frac{d}{dt}\mathcal{H}_{\mathcal{G}}(u,\Phi)=0.
\end{equation*}
\end{proposition}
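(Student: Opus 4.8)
The plan is to verify directly that $\frac{d}{dt}\mathcal{H}_{\mathcal{G}}(u,\Phi)=0$ along solutions $(u,\Phi)$ of \eqref{MFCeq}, and separately to recognize that \eqref{MFCeq} is precisely the Hamiltonian system $\partial_t u = \frac{\delta}{\delta\Phi}\mathcal{H}_{\mathcal{G}}$, $\partial_t\Phi = -\frac{\delta}{\delta u}\mathcal{H}_{\mathcal{G}}$. In fact once the second identification is made, the conservation of $\mathcal{H}_{\mathcal{G}}$ is automatic, since
\begin{equation*}
\frac{d}{dt}\mathcal{H}_{\mathcal{G}}(u,\Phi)=\int_\Omega\Big(\frac{\delta}{\delta u}\mathcal{H}_{\mathcal{G}}\Big)\partial_t u\,dx+\int_\Omega\Big(\frac{\delta}{\delta\Phi}\mathcal{H}_{\mathcal{G}}\Big)\partial_t\Phi\,dx=\int_\Omega\Big(\frac{\delta}{\delta u}\mathcal{H}_{\mathcal{G}}\Big)\Big(\frac{\delta}{\delta\Phi}\mathcal{H}_{\mathcal{G}}\Big)dx-\int_\Omega\Big(\frac{\delta}{\delta\Phi}\mathcal{H}_{\mathcal{G}}\Big)\Big(\frac{\delta}{\delta u}\mathcal{H}_{\mathcal{G}}\Big)dx=0.
\end{equation*}
So the real content is the variational-derivative bookkeeping.

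First I would compute $\frac{\delta}{\delta\Phi}\mathcal{H}_{\mathcal{G}}(u,\Phi)$. Since $\mathcal{H}_{\mathcal{G}}$ is quadratic in $\nabla\Phi$ in its first term and linear in $\nabla\Phi$ in the others, integration by parts gives
\begin{equation*}
\frac{\delta}{\delta\Phi}\mathcal{H}_{\mathcal{G}}(u,\Phi)=-\nabla\cdot\big(A(u)G''(u)^{-1}\nabla\Phi\big)-\nabla\cdot f(u)+\beta\,\nabla\cdot(A(u)\nabla u),
\end{equation*}
which is exactly the right-hand side needed so that $\partial_t u=\frac{\delta}{\delta\Phi}\mathcal{H}_{\mathcal{G}}$ reproduces the first line of \eqref{MFCeq}. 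Next I would compute $\frac{\delta}{\delta u}\mathcal{H}_{\mathcal{G}}(u,\Phi)$, differentiating term by term: the first term contributes $\frac12(\nabla\Phi,(A(u)G''(u)^{-1})'\nabla\Phi)$ (here $'$ is $d/du$, and one uses that $\nabla\Phi$ is held fixed); the term $(\nabla\Phi,f(u))$ contributes $(\nabla\Phi,f'(u))$; the diffusion term $-\beta(\nabla\Phi,A(u)\nabla u)$ requires care because $u$ appears both in $A(u)$ and under the gradient — expanding $\delta[-\beta\int(\nabla\Phi,A(u)\nabla u)]$ with a test perturbation $\delta u$ and integrating by parts to collect all terms on $\delta u$ yields $\beta\nabla\cdot(A(u)\nabla\Phi)-\beta(\nabla\Phi,A'(u)\nabla u)$; and $\mathcal{F}(u)$ contributes $\frac{\delta}{\delta u}\mathcal{F}(u)$. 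Summing and negating gives exactly the second line of \eqref{MFCeq}, namely $\partial_t\Phi=-\frac{\delta}{\delta u}\mathcal{H}_{\mathcal{G}}$.

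Having established the two variational-derivative identities, I would then invoke the skew-symmetry computation displayed above to conclude $\frac{d}{dt}\mathcal{H}_{\mathcal{G}}=0$. The main obstacle is the diffusion term $-\beta\int_\Omega(\nabla\Phi,A(u)\nabla u)\,dx$: it is not of the form $\int F(u)\,dx$ nor purely quadratic in $\nabla\Phi$, so its $u$-variation mixes a term from varying $A(u)$ with a term from varying $\nabla u$, and one must integrate by parts (using the periodic boundary conditions on $\Omega=\mathbb{T}^n$ to kill boundary contributions) to rewrite everything as a multiplier against $\delta u$; getting the signs and the split into $\beta\nabla\cdot(A(u)\nabla\Phi)-\beta(\nabla\Phi,A'(u)\nabla u)$ correct is the delicate step. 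Everything else is a routine matching against the system derived in Proposition \ref{MFH}.
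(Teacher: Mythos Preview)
Your proposal is correct and follows essentially the same approach as the paper: compute $\frac{\delta}{\delta\Phi}\mathcal{H}_{\mathcal{G}}$ and $\frac{\delta}{\delta u}\mathcal{H}_{\mathcal{G}}$ explicitly, match them against the two lines of \eqref{MFCeq}, and then conclude conservation via the standard skew-symmetry identity $\int(\delta_u\mathcal{H}_{\mathcal{G}})(\delta_\Phi\mathcal{H}_{\mathcal{G}})-\int(\delta_\Phi\mathcal{H}_{\mathcal{G}})(\delta_u\mathcal{H}_{\mathcal{G}})=0$. Your treatment of the diffusion term is in fact more detailed than the paper's, which simply records the result $-\beta(\nabla\Phi,A'(u)\nabla u)+\beta\nabla\cdot(A(u)\nabla\Phi)$ without spelling out the integration-by-parts step.
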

\begin{proposition}[Functional Hamilton-Jacobi equations of conservation laws]\label{prop10}
The Hamilton-Jacobi equation in $(\mathcal{M}, \g)$ for equation \eqref{MFCeq} satisfies
\begin{equation*}
\begin{split}
&\partial_t\mathcal{U}(t,u)+\int_\Omega\Big[ \frac{1}{2}\big(\nabla \frac{\delta}{\delta u(x)}\mathcal{U}(t,u), A(u)G''(u)^{-1}\nabla\frac{\delta}{\delta u(x)}\mathcal{U}(t,u)\big)+\big(\nabla\frac{\delta}{\delta u(x)}\mathcal{U}(t,u), f(u)\big)\\
&\hspace{6cm}-\beta (\nabla \frac{\delta}{\delta u(x)}\mathcal{U}(t,u), A(u)\nabla u)\Big]dx+\mathcal{F}(u)=0,
\end{split}
\end{equation*}
where $\mathcal{U}\colon [0,1]\times L^2(\Omega)\rightarrow\mathbb{R}$ is a value functional. 
\end{proposition}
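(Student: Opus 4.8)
The plan is to recover the stated equation as the Hamilton--Jacobi--Bellman equation of the optimal control problem \eqref{MFC}, via the dynamic programming principle, and to identify its Hamiltonian with the functional $\mathcal{H}_{\mathcal{G}}$ of \eqref{eq:hamiltonian_functional} through a Legendre transform in the velocity variable $v$. First I would fix the meaning of the value functional: for $(t,\rho)\in[0,1]\times\mathcal{M}$, let $\mathcal{U}(t,\rho)$ be the optimal value of \eqref{MFC} with the running integral taken over $[t,1]$ and the state pinned at $u(t,\cdot)=\rho$ (the complementary convention over $[0,t]$ gives the same equation with $\mathcal{H}_{\mathcal{G}}$ evaluated at the opposite sign of $\frac{\delta}{\delta u}\mathcal{U}$, which accounts for the sign convention in the statement). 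The dynamic programming principle then reads
\[
\mathcal{U}(t,\rho)=\inf_{v}\Big\{\int_t^{t+h}\Big[\int_\Omega\tfrac12\big(v,A(u)G''(u)^{-1}v\big)dx-\mathcal{F}(u)\Big]ds+\mathcal{U}\big(t+h,u(t+h,\cdot)\big)\Big\},
\]
where $u$ solves the constraint \eqref{MFC2} on $[t,t+h]$ with control $v$ and $u(t,\cdot)=\rho$. Expanding $\mathcal{U}(t+h,u(t+h,\cdot))$ to first order in $h$ by the functional chain rule, dividing by $h$ and letting $h\to0$ gives
\[
0=\partial_t\mathcal{U}+\inf_v\Big\{\int_\Omega\Big[\tfrac12\big(v,A(u)G''(u)^{-1}v\big)+\tfrac{\delta}{\delta\rho(x)}\mathcal{U}\cdot\partial_t u\Big]dx-\mathcal{F}(\rho)\Big\}.
\]

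Next I would insert $\partial_t u=-\nabla\cdot f(u)-\nabla\cdot\big(A(u)G''(u)^{-1}v\big)+\beta\nabla\cdot(A(u)\nabla u)$ from \eqref{MFC2} and integrate by parts in $x$, using the periodic boundary conditions, to move each divergence onto $\frac{\delta}{\delta\rho}\mathcal{U}$. The $v$-dependent part is then the strictly convex quadratic $\int_\Omega\big[\tfrac12\big(v,A(u)G''(u)^{-1}v\big)+\big(\nabla\tfrac{\delta}{\delta\rho}\mathcal{U},A(u)G''(u)^{-1}v\big)\big]dx$; this is where the entropy-entropy flux-metric condition (Definition \ref{def2}) enters, since $A(u)G''(u)^{-1}$ being symmetric positive definite makes the minimizer the feedback $v=\nabla\frac{\delta}{\delta u}\mathcal{U}$ (up to a sign, matching $v=\nabla\Phi$ of Proposition \ref{MFH}), with optimal value reproducing the kinetic term $\tfrac12\int_\Omega\big(\nabla\frac{\delta}{\delta u}\mathcal{U},A(u)G''(u)^{-1}\nabla\frac{\delta}{\delta u}\mathcal{U}\big)dx$. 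Collecting this with the flux term $\int_\Omega(\nabla\frac{\delta}{\delta u}\mathcal{U},f(u))dx$, the diffusion term $-\beta\int_\Omega(\nabla\frac{\delta}{\delta u}\mathcal{U},A(u)\nabla u)dx$ and $\mathcal{F}(u)$ yields exactly $\partial_t\mathcal{U}+\mathcal{H}_{\mathcal{G}}\big(u,\frac{\delta}{\delta u}\mathcal{U}\big)=0$, which is the displayed equation. As a consistency check, Proposition \ref{prop9} identifies the characteristic system $\partial_t u=\frac{\delta}{\delta\Phi}\mathcal{H}_{\mathcal{G}}$, $\partial_t\Phi=-\frac{\delta}{\delta u}\mathcal{H}_{\mathcal{G}}$ of this Hamilton--Jacobi equation, with $\Phi=\frac{\delta}{\delta u}\mathcal{U}$, as precisely the primal-dual PDE pair \eqref{MFCeq}.

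I expect the main obstacle to be rigor rather than algebra: establishing the dynamic programming principle, making sense of the first variation $\frac{\delta}{\delta\rho}\mathcal{U}$ and the chain rule along solutions of \eqref{MFC2} in this infinite-dimensional (density) setting, and interpreting $\mathcal{U}$ as a viscosity rather than a classical solution. I would present the derivation formally, treating $\mathcal{U}$ as a smooth value functional, as is standard for mean-field control and Hamilton--Jacobi equations on spaces of densities; the remaining steps are just the integration by parts and the explicit Legendre transform indicated above.
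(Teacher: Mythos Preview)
Your argument is correct but takes a genuinely different route from the paper. The paper's proof is a one-line formal substitution: having already shown in Proposition~\ref{prop9} that the critical-point system \eqref{MFCeq} is the Hamiltonian flow $\partial_t u=\frac{\delta}{\delta\Phi}\mathcal{H}_{\mathcal{G}}$, $\partial_t\Phi=-\frac{\delta}{\delta u}\mathcal{H}_{\mathcal{G}}$, the authors simply set $\Phi=\frac{\delta}{\delta u}\mathcal{U}$ and write down $\partial_t\mathcal{U}+\mathcal{H}_{\mathcal{G}}(u,\frac{\delta}{\delta u}\mathcal{U})=0$ as the Hamilton--Jacobi equation whose characteristics are that flow. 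You instead return to the control problem \eqref{MFC} and derive the equation via dynamic programming and a Legendre transform in $v$. Your approach is more informative---it identifies $\mathcal{U}$ concretely as a value functional and recovers the feedback $v=\nabla\Phi$ as the optimal control---but it forces you to track signs that the paper's shortcut avoids: the infimum over $v$ of $\int_\Omega\big[\tfrac12(v,A G''^{-1}v)+(\nabla p,A G''^{-1}v)\big]dx$ is $-\tfrac12\int_\Omega(\nabla p,A G''^{-1}\nabla p)\,dx$, not $+\tfrac12$, so the value functional $V$ you define on $[t,1]$ actually satisfies $\partial_t V=\mathcal{H}_{\mathcal{G}}(u,-\frac{\delta V}{\delta u})$; the stated form is then obtained with $\mathcal{U}=-V$, which is exactly consistent with the terminal condition $\Phi(1,\cdot)=-\frac{\delta}{\delta u_1}\mathcal{H}$ in Proposition~\ref{MFH}. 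Your parenthetical about conventions gestures at this, but you should make the identification $\mathcal{U}=-V$ (equivalently $\Phi=-\frac{\delta V}{\delta u}$) explicit rather than leaving it as ``up to a sign.''
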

\begin{proof}[Proof of Proposition \ref{prop9} and Proposition \ref{prop10}]
The proof follows from the definition of Hamiltonian dynamics. We can check it directly by using the first variation operators. In other words, 
\begin{equation*}
\frac{\delta}{\delta\Phi}\mathcal{H}_{\mathcal{G}}(u,\Phi)=-\nabla\cdot(A(u)G''(u)^{-1}\nabla\Phi)-\nabla\cdot f(u)+\beta\nabla\cdot(A(u)\nabla u),    
\end{equation*}
and 
\begin{equation*}
\begin{split}
\frac{\delta}{\delta u}\mathcal{H}_{\mathcal{G}}(u,\Phi)=&\frac{1}{2}(\nabla\Phi, (A(u)G''(u)^{-1})'\nabla\Phi)+(\nabla\Phi, f'(u))+\frac{\delta}{\delta u}\mathcal{F}(u)\\
&-\beta (\nabla\Phi, A'(u)\nabla u)+\beta\nabla\cdot(A(u)\nabla\Phi).    
\end{split}
\end{equation*}
Clearly, $\mathcal{H}_{\mathcal{G}}(u,\Phi)$ is conserved since 
\begin{equation*}
\begin{split}
\frac{d}{dt}\mathcal{H}_{\mathcal{G}}(u,\Phi)=&\int_\Omega \Big(\frac{\delta}{\delta u}\mathcal{H}_{\mathcal{G}}\cdot \partial_t u+\frac{\delta}{\delta \Phi}\mathcal{H}_{\mathcal{G}}\cdot\partial_t\Phi\Big)dx \\
=&\int_\Omega \Big(\frac{\delta}{\delta u}\mathcal{H}_{\mathcal{G}}\cdot \frac{\delta}{\delta\Phi}\mathcal{H}_{\mathcal{G}}-\frac{\delta}{\delta \Phi}\mathcal{H}_{\mathcal{G}}\cdot\frac{\delta}{\delta u}\mathcal{H}_{\mathcal{G}}\Big) dx \\
=&0. 
\end{split}
\end{equation*}
This finishes the proof of Proposition \ref{prop9}.

We next use the Hamiltonian functional to formulate the Hamilton-Jacobi equation in $(\mathcal{M}, \g)$. Define a functional by $\mathcal{U}\colon \mathbb{R}_+\times\mathcal{M}\rightarrow\mathbb{R}$. Let $\Phi(t,x)=\frac{\delta}{\delta u(x)}\mathcal{U}(t, u)$ in Hamiltonian flow \eqref{MFCeq}. We obtain the Hamilton-Jacobi equation in metric space $(\mathcal{M}, \g)$, where
\begin{equation*}
\partial_t\mathcal{U}(t,u)+\mathcal{H}_{\mathcal{G}}(u, \frac{\delta}{\delta u}\mathcal{U}(t,u))=0. 
\end{equation*}
This finishes the proof of Proposition \ref{prop10}. 
\end{proof}
\begin{remark}[Fisher information regularizations]
We remark that the Fisher information functional is also connected with control problems of conservation laws. See similar studies in \cite{LL}. We leave the study about information functional regularizations of conservation laws in a sequential work.  
\end{remark}
\subsection{Examples}\label{section6}
In this subsection, we list several examples of control problems for scalar conservation laws. 
\begin{example}[Controlling heat equations]
Consider the heat equation
\begin{equation*}
\partial_t u=\beta\Delta u,
\end{equation*}
where $u\colon [0,+\infty)\times \Omega\rightarrow\mathbb{R}_+$ is the probability density function.
It satisfies PDE \eqref{cld}, where
\begin{equation*}
u\in\mathbb{R}_+^1, \quad f(u)=0,\quad A(u)=\mathbb{I}.
\end{equation*}
In this case, a function pair $(G, \Psi)$, where $G$ is a convex function and $\Psi=0$, satisfies the entropy-entropy flux condition. In other words,
\begin{equation*}
\Psi'(u)=f'(u)G'(u)=0.
\end{equation*}
In particular, if $G(u)=u\log u-u$, then the variational problem \eqref{MFC} satisfies 
\begin{equation*}
\inf_{u, v, u_1}~\int_0^1\Big[\int_\Omega \frac{1}{2}|v|^2udx-\mathcal{F}(u)\Big]dt+\mathcal{H}(u_1),
\end{equation*}
where the infimum is taken among variables $u, v, u_1$ satisfying 
\begin{equation*}
\partial_t u+\nabla\cdot(uv)=\beta \Delta u,\quad u(0,x)=u_0(x).
\end{equation*}
Here the minimizer system is given below. There exists a function $\Phi$, such that $v=\nabla\Phi$ and
\begin{equation*}
\left\{\begin{aligned}
&\partial_t u+\nabla\cdot(u\nabla\Phi)=\beta \Delta u,\\
&\partial_t\Phi+\frac{1}{2}\|\nabla\Phi\|^2+\frac{\delta}{\delta u}\mathcal{F}(u)=-\beta \Delta \Phi.
\end{aligned}\right.
\end{equation*}
The above optimal control problem and critical point system has been widely studied in the optimal transport ($\mathcal{F}=0$, $\beta=0$), Schr{\"o}dinger bridge problems ($\mathcal{F}=0$, $\beta> 0$) and potential mean field games. There is a Hamiltonian formalism for the PDE system $(u,\Phi)$. In other words, \begin{equation*}
\partial_t u=\frac{\delta}{\delta\Phi}\mathcal{H}_{\mathcal{G}}(u,\Phi), \quad \partial_t \Phi=-\frac{\delta}{\delta u}\mathcal{H}_{\mathcal{G}}(u,\Phi),
\end{equation*}
where the Hamiltonian functional satisfies 
\begin{equation*}
\mathcal{H}_{\mathcal{G}}(u,\Phi)=\int_\Omega \Big[\frac{1}{2}\|\nabla\Phi\|^2 u-\beta (\nabla \Phi, \nabla u)\Big] dx+\mathcal{F}(u).
\end{equation*}
\end{example}
\begin{example}[Controlling scalar conservation laws]
Consider
\begin{equation*}
\partial_tu+\nabla\cdot f(u)=\beta\Delta u,
\end{equation*}
where $u\colon [0,+\infty)\times \Omega\rightarrow\mathbb{R}$ and $A=\mathbb{I}$. 
\begin{itemize}
\item[(i)] Let $G(u)=u\log u-u$. Then variational problem \eqref{MFC} satisfies
\begin{equation*}
\inf_{u, v, u_1}~\int_0^1\Big[\int_\Omega \frac{1}{2}|v|^2udx-\mathcal{F}(u)\Big]dt+\mathcal{H}(u_1),
\end{equation*}
where the infimum is taken among variables $u, v, u_1$ satisfying 
\begin{equation*}
\partial_t u+\nabla\cdot f(u)+\nabla\cdot(uv)=\beta \Delta u,\quad u(0,x)=u_0(x).
\end{equation*}
Here the minimizer system is given below. There exists a function $\Phi$, such that $v=\nabla\Phi$ and
\begin{equation*}
\left\{\begin{aligned}
&\partial_t u+\nabla\cdot f(u)+\nabla\cdot(u\nabla\Phi)=\beta \Delta u,\\
&\partial_t\Phi+(\nabla\Phi, f'(u))+\frac{1}{2}\|\nabla\Phi\|^2+\frac{\delta}{\delta u}\mathcal{F}(u)=-\beta \Delta \Phi.
\end{aligned}\right.
\end{equation*}
I.e., 
\begin{equation*}
\partial_t u=\frac{\delta}{\delta\Phi}\mathcal{H}_{\mathcal{G}}(u,\Phi), \quad \partial_t \Phi=-\frac{\delta}{\delta u}\mathcal{H}_{\mathcal{G}}(u,\Phi),
\end{equation*}
where the Hamiltonian functional satisfies 
\begin{equation*}
\mathcal{H}_{\mathcal{G}}(u,\Phi)=\int_\Omega \Big[\frac{1}{2}\|\nabla\Phi\|^2 u +(\nabla\Phi, f(u))-\beta (\nabla \Phi, \nabla u)\Big] dx+\mathcal{F}(u).
\end{equation*}

\item[(ii)]  Let $G(u)=\frac{u^2}{2}$. Then variational problem \eqref{MFC} satisfies
\begin{equation*}
\inf_{u, v, u_1}~\int_0^1\Big[\int_\Omega \frac{1}{2}|v|^2dx-\mathcal{F}(u)\Big]dt+\mathcal{H}(u_1),
\end{equation*}
where the infimum is taken among variables $u, v, u_1$ satisfying 
\begin{equation*}
\partial_t u+\nabla\cdot f(u)+\nabla\cdot v=\beta \Delta u,\quad u(0,x)=u_0(x).
\end{equation*}
Here the minimizer system is given below. There exists a function $\Phi$, such that $v=\nabla\Phi$ and
\begin{equation*}
\left\{\begin{aligned}
&\partial_t u+\nabla\cdot f(u)+\nabla\cdot(\nabla\Phi)=\beta \Delta u,\\
&\partial_t\Phi+(\nabla\Phi, f'(u))+\frac{\delta}{\delta u}\mathcal{F}(u)=-\beta \Delta \Phi.
\end{aligned}\right.
\end{equation*}
I.e.,
\begin{equation*}
\partial_t u=\frac{\delta}{\delta\Phi}\mathcal{H}_{\mathcal{G}}(u,\Phi), \quad \partial_t \Phi=-\frac{\delta}{\delta u}\mathcal{H}_{\mathcal{G}}(u,\Phi),
\end{equation*}
where the Hamiltonian functional satisfies 
\begin{equation*}
\mathcal{H}_{\mathcal{G}}(u,\Phi)=\int_\Omega \Big[\frac{1}{2}\|\nabla\Phi\|^2 +(\nabla\Phi, f(u))-\beta (\nabla \Phi, \nabla u)\Big] dx+\mathcal{F}(u).
\end{equation*}
\end{itemize}
\end{example}
	\section{Numerical methods and examples}\label{primal-dual}
In this section, we first review classical primal-dual hybrid gradient algorithms (PDHG) and their extensions. We next apply this approach to solve the variational problem defined in equation \eqref{MFC} subject to the constraint involving conservation laws. We design a finite difference scheme to discretize conservation laws and solve the variational problem on grids. Several numerical examples, including Burgers' equation and traffic flow, are presented. 
	\subsection{PDHG algorithm and its extension}
The PDHG algorithm \cite{champock11} solves the following constrained convex optimization problem 
\begin{align*}
\min_{z}  h(Kz) + g(z),
\end{align*}
where $\mathcal{Z}$ is a finite or infinite dimensional Hilbert space, $h$ and $g$ are convex functions and $K:\mathcal{Z}\rightarrow \mathcal{H}$ is a linear operator between Hilbert spaces. This problem can be rewritten in the saddle-point problem form
\begin{align*}
\min_z \max_p \langle Kz,p\rangle_{L^2} + g(z) -h^*(p),
\end{align*}
where $h^*(p) = \sup_z \langle Kz,p\rangle_{L^2}  -h(z)$, which is the convex conjugate of $h$. The algorithm updates $z$, $p$ by taking proximal gradient descent, proximal gradient ascent steps alternatively. At the $n$-th iteration, the algorithm updates as follows
\begin{align*}
z^{n+1} &= \argmin_{z}   \langle Kz,\bar{p}^n\rangle_{L^2}  + g(z) +  \frac{1}{2 \tau} \| z - z^n\|^2_{L^2},\\
p^{n+1} &= \argmax_{p}   \langle Kz^{n+1},p\rangle_{L^2}  -h^*(p) -\frac{1}{2 \sigma} \| p - p^n\|^2_{L^2},\\
\bar{p}^{n+1} &= 2p^{n+1} - p^{n}.
\end{align*}
Here $\tau$, $\sigma$ are stepsizes, which have to satisfy $\sigma \tau \|K^T K\|<1 $ in order to guarantee the convergence of the algorithm. When the operator $K$ is nonlinear, we use the extension of PDHG algorithm \cite{clason2017primal}. The idea is to use the linear approximation of $K$:
\begin{align*}
K(z) \approx K(\bar{z}) + \nabla K(\bar{z})(z - \bar{z}).
\end{align*}
Here, the extension of the PDHG scheme is as follows 
\begin{equation}\label{alg:pdhg_nonlinear}
\begin{aligned}
z^{n+1} & = \argmin_{z}   \langle z,[\nabla K(z^n)]^T\bar{p}^n\rangle_{L^2}  + g(x) +  \frac{1}{2 \tau} \| z -  z^n\|^2_{L^2},\\
p^{n+1} & = \argmax_{p}   \langle K(z^{n+1}),p\rangle_{L^2}  -h^*(p) - \frac{1}{2 \sigma} \| p - p^n\|^2_{L^2},\\
\bar{p}^{n+1} &= 2p^{n+1} - p^{n}.
\end{aligned}
\end{equation}	
	When $K$ is some unbounded linear operator, for instance $K = \nabla$, the operator norm $\|K\|$ can increase when we refine the grid size. Consequently, the algorithm may converge slowly due to small stepsizes. We apply a generalization of PDHG, namely the General-proximal Primal-Dual Hybrid Gradient (G-prox PDHG) method from \cite{JacobsLegerLiOsher2018_solvinga}. We choose a proper norm ($L^2, H^1, ...$) for the proximal step to allow larger stepsizes.
	
	We apply the nonlinear G-prox PDHG algorithm to solve the variational problem \eqref{MFC}, in particular its equivalent format \eqref{variation}. For illustration purpose, we use the traffic flow variational problem \eqref{main_variation_example_2} as an example and give details on the algorithm. 
	Set
	\begin{align*}
z & = (u,m),\\
p & = \Phi,\\
K\left((u,m)\right) & = \partial_tu+ \nabla \cdot f(u) +\nabla \cdot m - \beta \Delta u , \\
g\left((u,m)\right) & = \int_0^1 \left(\int_{\Omega} \frac{\|m\|^2}{2u} +\mathbf{1}_{[0,1]}(u)  dx - \mathcal{F}(u) \right)dt,\\
h(Kz) & = \begin{cases}
0 \quad \text{if} \;Kz = 0\\
+\infty \quad \text{else}
\end{cases}.
\end{align*}
	We rewrite the varational problem as follows:
	\begin{equation}\label{eqn:minmax0}
	\inf_{u,m} \sup_{\Phi}~~\mathcal{L}(u,m,\Phi),\quad \text{with}\;u(0,x) = u_0(x),~\Phi(1,x)=-\frac{\delta}{\delta u(1,x)} \mathcal{H}(u),
	\end{equation}
	where
	\begin{equation}\label{eq:L_pdhg_cont}
	\begin{aligned}{}
	\mathcal{L}(u,m,\Phi) = &\int_0^1 \left(\int_{\Omega} \frac{\|m\|^2}{2 u} +\mathbf{1}_{[0,1]}(u)  dx - \mathcal{F}(u) \right)dt \\
	&+ \int_0^1\int_{\Omega} \Phi \left( \partial_tu + \nabla \cdot f(u) +\nabla \cdot m - \beta \Delta u  \right)  dx dt,
		\end{aligned}
	\end{equation}
and $f(u) = u(1-u)$ is the traffic flux function. We denote the indicator function by
	\begin{equation*}
\mathbf{1}_{A} (x) =
 \begin{cases}
		+\infty \quad    x\notin A\\
		0 \quad \quad \; x \in A.
		\end{cases}
	\end{equation*}
We also choose $L^2$ norm for updating $(u,m)$ and $H^1$ norm for $\Phi$, specifically
\begin{align*}
    \|v\|^2_{L^2} =\int_0^1 \int_{\Omega}v^2 dxdt,\quad \|v\|^2_{H_1^2} =  \|\nabla v\|^2_{L^2} + \|\partial_t v\|^2_{L^2}. 
\end{align*}

We summarize the algorithm to solve problem \eqref{eqn:minmax0} as follows.
	\begin{tabbing}
		aaaaaaaaa\= aaa \=aaa\=aaa\=aaa\=aaa=aaa\kill  
		\rule{\linewidth}{0.8pt}\\
		\noindent{\large\bf Algorithm 1:PDHG for the conservation law control system}\\

		\rule{\linewidth}{0.5pt}\\
		\+\+\+\textbf{While} $k <$ Maximal number of iteration\\ 
		[2ex]   
		\' $\left(u^{(k+1)},m^{(k+1)}\right)$ \'  $= \text{argmin}_u~~ \mathcal{L}(u, m,\bar{\Phi}^{(k)}) + \frac{1}{2\tau} \|u - u^{(k)}\|^2_{L^2} + \frac{1}{2\tau} \|m - m^{(k)}\|^2_{L^2}$;\\
		[2ex]   
		\' $\Phi^{(k+ 1)}$ \' $= \text{argmax}_\Phi~~\mathcal{L}(u^{(k+1)}, m^{(k+1)},\Phi) - \frac{1}{2\sigma} \|\Phi - \Phi^{(k)}\|^2_{H_1^2}$;\\
		[1ex]   
		\-\-\-   \' $\bar{\Phi}^{(k+1)}$ \' $= 2 \Phi^{(k+1)} - \Phi^{(k)}$;\\
		\rule{\linewidth}{0.5pt}
	\end{tabbing}

	\subsection{Finite Difference Discretization}
In this subsection, we review basic numerical concepts of conservation law. More details can be found in \cite{leveque1992numerical}. Then we design the finite difference discretization for the control of conservation laws. 
	\subsubsection{Lax--Friedrichs scheme for the conservation law}
Consider a nonlinear scalar conservation law
\begin{equation}
\begin{cases}
	\partial_t u + \partial_xf(u) = \beta \partial_{xx}u,\\
	u(0,x) = u_0(x).
\end{cases}
\end{equation}
We review the Lax--Friedrichs scheme. Denote a discretization ${\Delta t},{\Delta x}$ in time and space, $u_j^k = u(k \Delta t, j\Delta x)$, then the update follows
\begin{equation}
\label{eq:lax_friedrichs}
	u_j^{k+1} = u_j^{k} - \frac{\Delta t}{2\Delta x}\left( f(u_{j+1}^k) -f(u_{j-1}^k) \right) + (\beta  + c \Delta x ) \frac{\Delta t }{(\Delta x)^2} \left(u_{j+1}^k -2 u_j^k + u_{j-1}^k \right).
\end{equation}
The last term contains diffusion coefficient $\beta$ from the original equation and $c\Delta x$ as coefficient for artificial viscosity with $c>0$.
Notice that the monotone scheme gives entropy solutions. Here the definition of monotone scheme is given below. 
\begin{definition}
	For $p,q \in\mathbb{N}$, a scheme 
	\begin{align*}
	u_j^{k+1} = G(u_{j-p-1}^k, ..., u_{j+q}^k)
	\end{align*}
	is called a \text{monotone scheme} if $G$ is a monotonically nondecreasing function of each argument.
\end{definition}
In order to guarantee that the scheme \eqref{eq:lax_friedrichs} is monotone, the following inequalities have to be satisfied:
\begin{align*}
	1-2(\beta + c \Delta x )\frac{\Delta t }{(\Delta x)^2} \geq 0,\\
	-\frac{\Delta t}{2\Delta x}|f'(u)| + (\beta + c \Delta x )\frac{\Delta t }{(\Delta x)^2}  \geq 0.
\end{align*}
As we want the scheme works when $\beta \rightarrow 0$, the restriction on $c$ and space--time stepsizes can be simplified as follows:
\begin{align*}
	c\geq \frac{1}{2}|f'(u)|,\\
	 {(\Delta x)^2} \geq 2(\beta + c \Delta x) \Delta t. 
\end{align*} 
The first inequality suggests the artificial viscosity we need to add. The second one impose a strong restriction on the stepsize in time when $\beta >0$.
	\subsubsection{Discreitization of the control problem}
	We consider the control problem of scalar conservation law defined in $[0,b]\times [0,1]$, where $b$ is a given constant. We apply the periodic boundary condition on the spatial domain. Given $N_{x},N_t>0$, we have $\Delta x = \frac{b}{N_{x}}$, $\Delta t = \frac{1}{N_{t}}$.
	For $x_i = i \Delta x,t_l =l \Delta t $, define  
	\begin{align*}
	& u_{i}^l = u(t_l,x_i) &\quad 1\leq i \leq N_{x}, 0\leq l \leq N_t,\\
	& m_{1,i}^l = \left(m_{x_1}(t_l,x_i)\right)^+  &\quad 1\leq i \leq N_{x},  0\leq l \leq  N_t-1,\\
	& m_{2,i}^l = -\left(m_{x_1}(t_l,x_i)\right)^-  &\quad 1\leq i \leq N_{x},  0\leq l \leq N_t-1,\\
	& \Phi_{i}^l = \Phi(t_l,x_i)  &\quad 1\leq i \leq N_{x}, 0\leq l \leq N_t,\\
	& \Phi_{i}^{Nt} = -\frac{\delta}{\delta u(1,x_i)} \mathcal{H}(u_i^{N_t}) &\quad 1\leq i \leq N_{x},
	\end{align*}
	where $u^+ := \max(u,0)$ and $u^- = u^+ -u$. Note here $m_{1,i}^l \in \mathbb{R}_+, m_{2,i}^l \in \mathbb{R}_{-} $. Denote
	\begin{align*}
	&	\left(D u\right)_{i} := \frac{u_{i+1} - u_{i}}{\Delta x}\\
	&	[D u]_{i} :=\left( \left(D u\right)_{i},\left(D u\right)_{i-1}\right)\\
	&	\widehat{[D u]}_{i}  = \left( \left(D u\right)^+_{i},-\left(D u\right)^-_{i-1}\right)\\
	& Lap(u)_{i} = \frac{u_{i+1} - 2u_{i} + u_{i-1}}{(\Delta x)^2}
	\end{align*}
	The first conservation law equation adapted from the Lax--Friedrichs scheme is as follows:
	\begin{equation}
	\frac{1}{\Delta t} \left( u_{i}^{l+1} - u_{i}^{l}\right) + 	\frac{1}{2\Delta x} \left( f(u_{i+1}^{l+1}) - f(u_{i-1}^{l+1})\right) + \left(D m\right)^{l}_{1,i-1}  +\left(D m\right)^{l}_{2,i}  = (\beta + c\Delta x) Lap(u)^{l+1}_i, 
	\end{equation}
	where $1\leq i \leq N_{x}$,  $0\leq l \leq N_t-1$. We choose $c \geq \frac{1}{2}\max_u |f'(u)|$. Unlike the Lax--Friedrichs scheme in explicit form, we use an implicit discretization in time to encode the feedback control structure. Another benefit of the implicit scheme is that it allows a larger mesh size in time. 
	
	Following the discretization of the conservation law, the discrete saddle point problem has the following form:
\begin{align*}
\min_{u,m} \max_{\Phi} L(u,m,\Phi),
\end{align*}
where
\begin{equation*}
\begin{aligned}
L(u,m,\Phi)= &\Delta x \Delta t \sum_{\substack{1 \leq i \leq N_x\\1 \leq l \leq N_t}} \frac{(m_{1,i}^{l-1})^2 +(m_{2,i}^{l-1})^2}{2 u_i^l}-\Delta t \sum_{ 1 \leq l \leq N_t} \mathcal{F}(u^l)\\
&+ \Delta x \sum_{1 \leq i \leq N_x}\mathcal{H}( u_{i}^{N_t}) + \sum_{\substack{1 \leq i \leq N_x\\1 \leq l \leq N_t}} \mathbf{1}_{[0,1]}(u_i^l)\\
& + \Delta x \Delta t \sum_{\substack{1 \leq i \leq N_x\\ 0 \leq l \leq N_t-1}} \Phi_{i}^{l} \Big( \frac{1}{\Delta t} ( u_{i}^{l+1} - u_{i}^{l}) + 	\frac{1}{2\Delta x} ( f(u_{i+1}^{l+1}) - f(u_{i-1}^{l+1}))\\
&\hspace{2.5cm}+(D m)^{l}_{1,i-1}  +(D m)^{l}_{2,i} - (\beta + c\Delta x) Lap(u)^{l+1}_i \Big).
\end{aligned}
\end{equation*}

By taking the first order derivative of $u_i^l$, we automatically get the implicit finite difference scheme for the dual equation of $\Phi$  that is backward in time.
The positive and negative parts of $(D\phi)_i^l$ are split, which help enhance the monotonicity of the discrete Hamiltonian.

	\begin{equation}
		\frac{1}{\Delta t} \left( \Phi_i^{l+1} - \Phi_i^{l}\right) + \frac{(\Phi_{i+1}^l - \Phi_{i-1}^l)}{2\Delta x} \left( f'(u_{i}^{l})\right) + \frac{1}{2} \|[\widehat{D \Phi}]_{i}^l \|^2 + \frac{\delta \mathcal{F}(u_i^l)}{\delta u} = -(\beta + c\Delta x) Lap(\Phi)^{l}_i,
	\end{equation}
			 for $1\leq i \leq N_{x}$, $1 \leq l \leq N_t$.
	   
We remark that the above discretizations, when $f(u) = 0$, reduce to the finite difference scheme for the mean-field game system proposed in \cite{briceno2019implementation}. The discrete form of the Hamiltonian functional \eqref{eq:hamiltonian_functional} at $t = t_l$ takes the form
\begin{equation*}
\begin{aligned}
   H_{\mathcal{G}}(u,\Phi) =\sum_{\substack 1\leq i \leq N_x} \left(\frac{1}{2} \|[\widehat{D \Phi}]_{i}^l \|^2u_i^l + \frac{(\Phi_{i+1}^l - \Phi_{i-1}^l)}{2\Delta x} \left( f(u_{i}^{l})\right) + (\beta + c\Delta x)u_i^l Lap(\Phi)^{l}_i \right) - \mathcal{F}(u^l).
\end{aligned}
\end{equation*}
We shall verify the conservation of Hamiltonian functional with numerical examples.

\subsubsection{Solve conservation laws via primal-dual algorithms}
When $\mathcal{F} =0$, $\mathcal{H} = c$ for some constant $c$, the variational problem \eqref{eqn:minmax0} becomes classical conservation laws with initial data. In this case, no control will be enforced on the density function $u$.
Therefore, the density movement is only determined by the flux term $\left(f(u)\right)_x.$
This means that the problem is reduced to initial value conservation laws. In this scenario, our approach proposed in Algorithm 1 provides an alternative way to solve the nonlinear conservation law with implicit discretization in time. In the implementation, we observed that this method allows a larger mesh size in time, thanks to the primal--dual variational structure. The method is also highly parallelizable as the proximal gradient descent step for $(u,m)$ is point-wise operation for each $(l,i)$. We demonstrate this part with two examples in the next section.
	
\subsection{Numerical examples}
In this subsection, we present numerical examples for control of conservation laws in one dimensional space. 
		\subsection{Example 1. }
	We consider the Burgers' equation on $(t,x) \in [0,1] \times [0,4]$.
	\begin{align*}
	\partial_t u + \partial_x f(u) = 0,\quad u(0,x) = \begin{cases}
	1 \quad &2\leq x \leq 3\\
	0 \quad \quad &\text{else}
	\end{cases},
	\end{align*}
	where $f(u) = \frac{1}{2}u^2, \mathcal{F} = 0, \mathcal{H} = 0, \beta = 0, k = 0.5$. The entropy solution to this problem at $t=1$ satisfies
	\begin{equation*}
		u(1,x) = \begin{cases}
		1 \quad & 3 \leq x \leq 3.5 \\
		(x-2) \quad &2 < x \leq 3\\
		0 \quad \quad &\text{else}
		\end{cases}.
	\end{equation*}
	In the following examples, we use the same spatial time domain with $N_t = 50, N_x = 100$ for the finite difference scheme. We solve the Burgers' equation using two approaches: solve the control problem; use the forward Lax--Friedrichs scheme. Figure \ref{fig:burgers_eg1} shows the solutions to the Burgers' equation. We can see that both numerical solutions are consistent with the exact entropy solution despite some numerical diffusions. From two plots on the right, there are clear formations of rarefaction wave and shock. We have verified that the shock travels at speed $v = \frac{1}{2}$.
	\begin{figure}[htbp!]
		\includegraphics[width=1\textwidth,trim=90 20 90 50, clip=true]{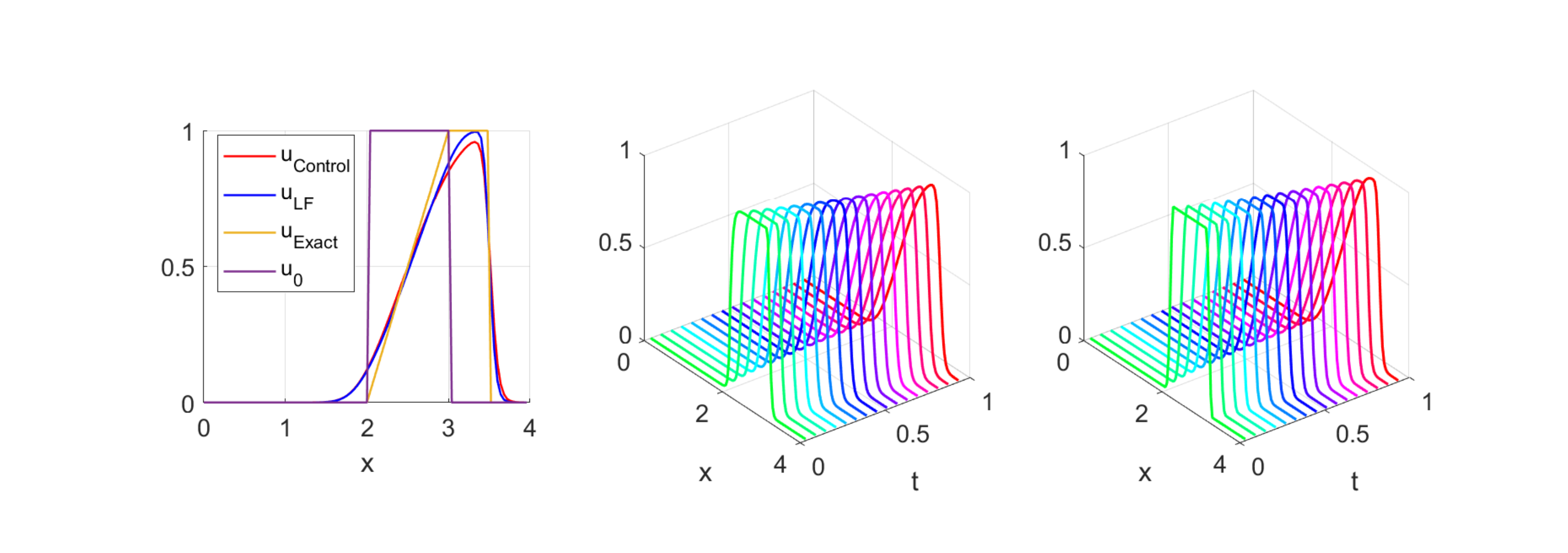}
		\caption{Numerical results for the Burgers' equation. Left:  a comparison with the exact solution at $t= 1$; middle: the numerical solution via solving the control problem; right: the numerical solution to the conservation law using Lax--Friedrichs scheme. }
		\label{fig:burgers_eg1}
	\end{figure}
	
	\subsection{Example 2. }
	We consider the traffic flow equation 
	\begin{align*}
		\partial_t u + \partial_x f(u) = 0,\quad u(0,x) = \begin{cases}
		0.8, \quad &1\leq x \leq 2\\
		0 \quad \quad &\text{else}
		\end{cases},
	\end{align*}
	where $f(u) = \frac{1}{2}u(1-u), \mathcal{F} = 0, \mathcal{H} = 0, \beta = 0, k = 0.5$.
\begin{figure}[htbp!]
	\includegraphics[width=1\textwidth,trim=75 20 90 50, clip=true]{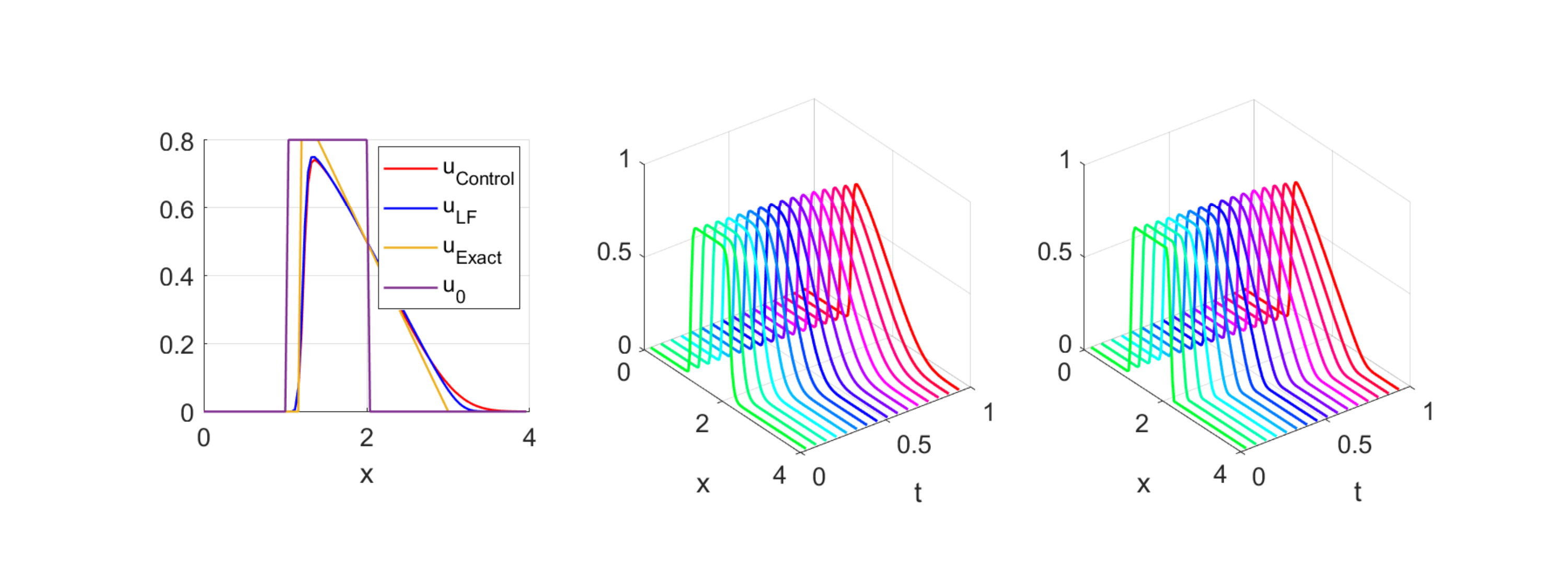}
	\caption{Numerical results for the traffic flow equation. Left:  a comparison with the exact solution at $t= 1$; middle: the numerical solution via solving the control problem; right: the numerical solution to the conservation law using Lax--Friedrichs scheme. }
	\label{fig:traffic_eg2}
\end{figure}
The entropy solution to this problem at time $t=1$ is
\begin{equation*}
	u(1,x) = \begin{cases}
	1 \quad & 1.2 \leq x \leq 1.4 \\
	\frac{1}{2}(3-x) \quad &1.4 < x \leq 3\\
	0 \quad \quad &\text{else}
	\end{cases}.
\end{equation*} 
We can see from Figure \ref{fig:traffic_eg2} (left) that the solution to the control problem gives the same solution to the conservation law, which matches the entropy solution. In the two plots on the right, we observe that both shocks and rarefaction waves are formed. This traffic flow example describes a group of cars $u_0$ waiting at the traffic light at $x=2$. At the time $t=0$, the red light turns green. But this group of cars doesn't move at a uniform constant speed. Instead, the car, whose originally position at time $t = 0$ is closer to the red light ($x=0$), moves faster. The density at position $x<0$  doesn't change until time $t = \frac{1}{2}(3-x)$.

\subsection{Example 3}
	We again consider the traffic flow equation with $f(u) = u(1-u), \beta= 0.1, \mathcal{F} = 0, c=0.5$. The final cost functional $ \mathcal{H}(u(1,\cdot)) = \mu \int_{\Omega}u(1,x) \log(\frac{u(1,x)}{u_1})dx, \mu >0$. We set $\mu =1$. In this case, the density $u(1,\cdot)$ will tend to form a `similar' distribution as $u_1$. When $\int_{\Omega}u_1 dx = \int_{\Omega}u_0 dx$ and $\mu \rightarrow + \infty$, this final cost functional is equivalent to imposes the constraint that $u(1,\cdot) = u_1$.  
We also compare the result from the control of conservation law with a mean-field game problem, i.e., $f=0$:
\begin{equation} \label{eq:mfg}
\begin{cases}
\partial_tu +\nabla \cdot (u \nabla \Phi)= \beta \Delta u,\\
\partial_t\Phi  +\frac{1}{2} \|\nabla \Phi\|^2 + \frac{\delta }{\delta u}\mathcal{F}(u)= -\beta \Delta \Phi,\\
u(0,x)=u_0(x),~\Phi(1,x)=-\frac{\delta}{\delta u_1} \mathcal{H}(u(1,\cdot)).
\end{cases}
\end{equation}
As shown in Figure \ref{fig:eg2_bdry}, we set
\begin{align*}
    u_0 &= 0.001 + 0.9e^{-10(x-2)^2},\\
    u_1 &= 0.001 + 0.45e^{-10(x-1)^2} + 0.45e^{-10(x-3)^2}.
\end{align*}
The solutions are presented in Figure \ref{fig:eg2_comparison}. We can see that in the mean-field game setup, there is an even split of the density. While for the control of conservation law, more density travels towards the location of the right-side Gaussian distribution. This demonstrates the difference between solutions in mean-field games and the ones in control of conservation laws. 
\begin{figure}[htbp!]
	\includegraphics[width=1.0\textwidth,trim=30 30 30 30, clip=true]{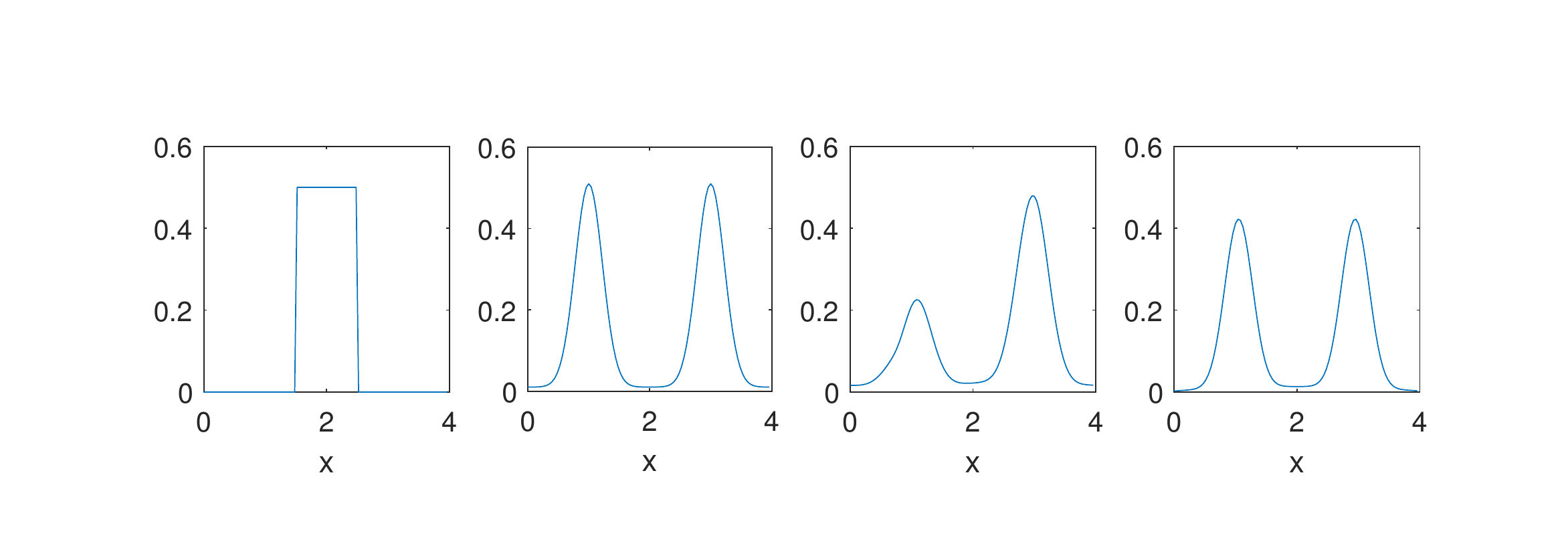}
	\caption{From left to right: initial configurations of $u_0 = 0.001 + 0.9e^{-10(x-2)^2}$, $u_1 = 0.001 + 0.45e^{-10(x-1)^2} + 0.45e^{-10(x-3)^2}$, solution $u(1,x)$ for the control of conservation law, solution $u(1,x)$ for the mean-field game problem.}
	\label{fig:eg2_bdry}	
\end{figure}

\begin{figure}[htbp!]
	\includegraphics[width=0.990\textwidth,trim=20 5 20 10, clip=true]{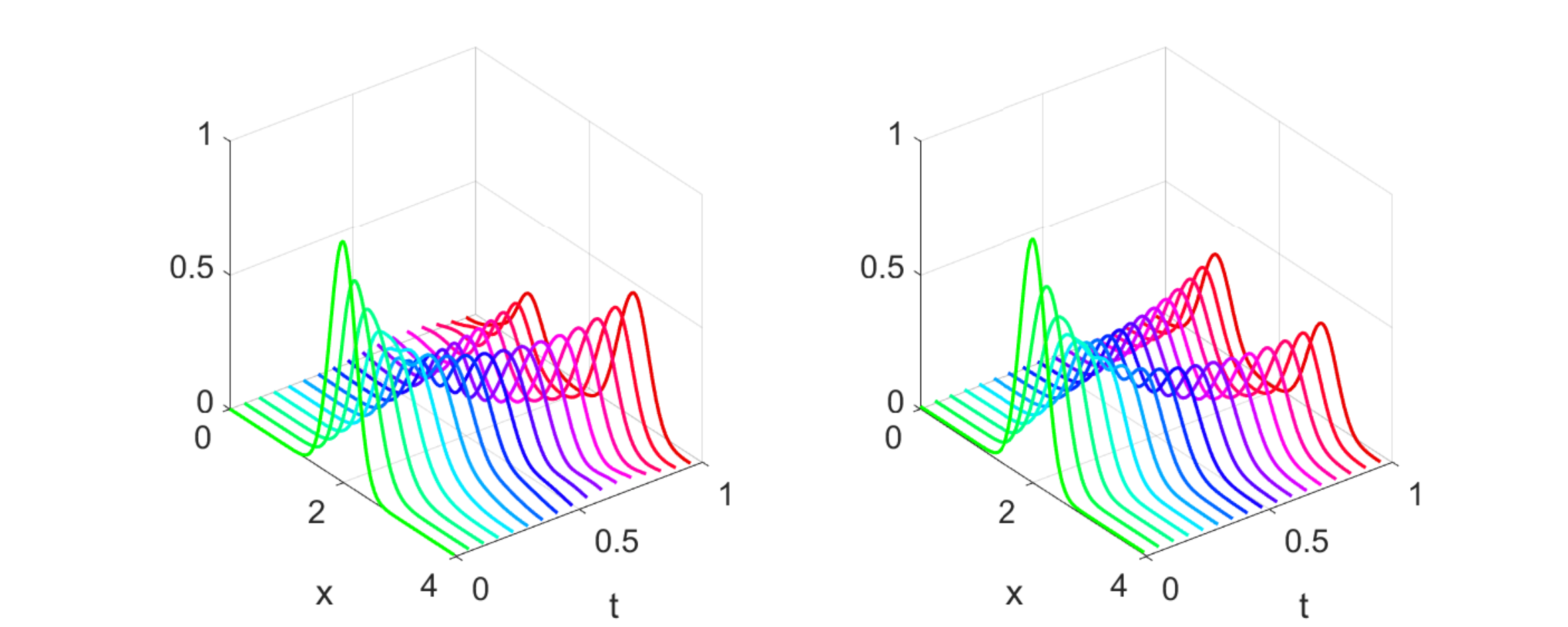}
	\caption{Left: solution $u(t,x)$ for the problem of controlling the conservation law; right: solution $u(t,x)$ for the mean-field game problem.}
	\label{fig:eg2_comparison}
\end{figure}

\subsection{Example 4}
Consider the traffic flow equation with $f(u) = u(1-u), \beta= 10^{-3},\mathcal{F} =- \alpha\int_{\Omega} u \log(u)dx, \alpha \geq 0$. The final cost functional $ \mathcal{H}(u(1,\cdot)) = \int_{\Omega}u(1,x) g(x)dx$.
The initial density and final cost function are as follows
\begin{align*}
u_0(x) = \begin{cases}
0.4 \quad &0.5\leq x \leq 1.5\\
10^{-3} \quad &\text{else}\\
\end{cases},
\quad \quad	g(x) = -0.1\sin(2\pi x).
\end{align*}
The term $\mathcal{F} =-\alpha\int_{\Omega}  u \log(u) dx, \alpha \geq 0$ penalizes the density for getting too concentrate. 
Figure \ref{fig:eg4} shows the effect of the term $\alpha  u \log(u)$, where the density is more spreading in space for the case $\alpha = 1$ than the case $\alpha =0.5$ and $\alpha =0$. We can also see from the $u$ profile at the terminal time. In Figure \ref{fig:eg4_H} (middle), $\alpha = 0$ case has $u(1,\cdot)$ has the most concentrated densities than others. We also numerically verify that the Hamiltonian functional is preserved over time in this example. In Figure \ref{fig:eg4_H} (right), the numerical Hamiltonian ${H}_{\mathcal{G}}(u,\Phi)$ is preserved with error of order of $O(\Delta x)$.

\begin{figure}[htbp!]
	\includegraphics[width=1.0\textwidth,trim=20 20 20 40, clip= true]{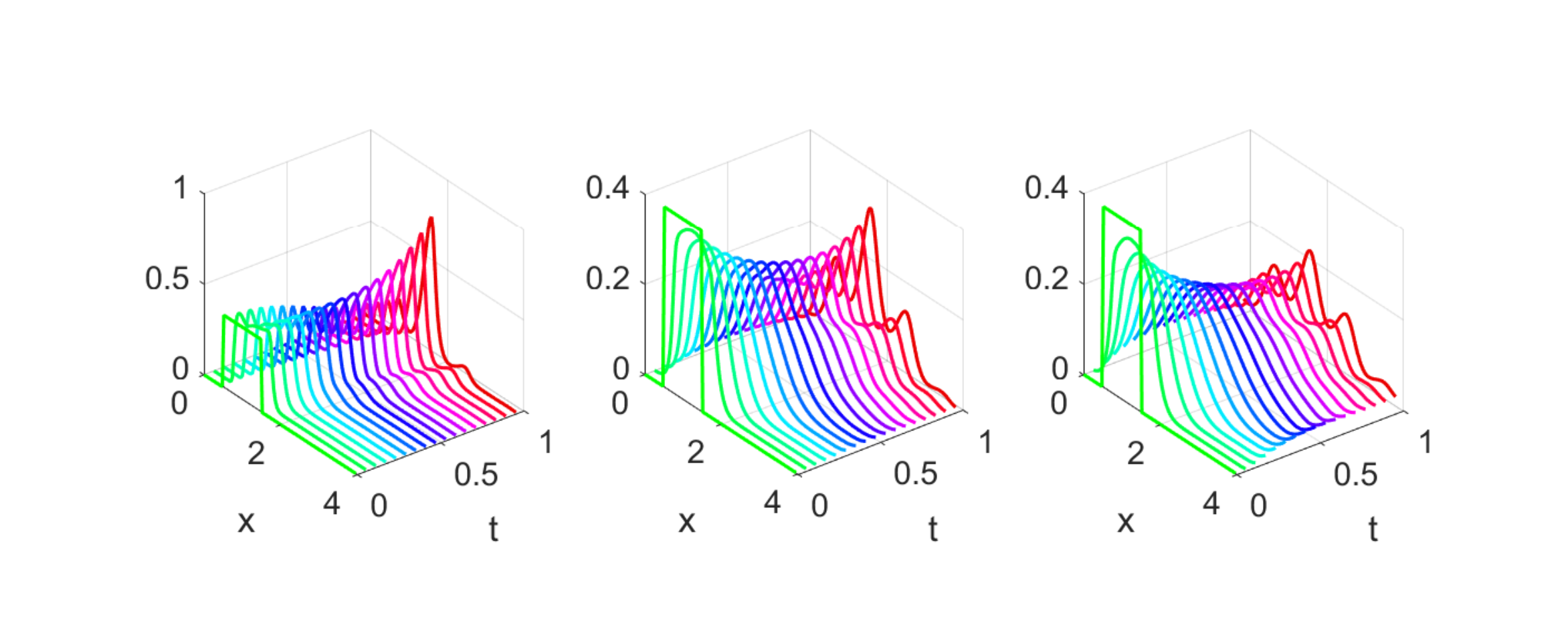}
	\caption{Solution $u(t,x)$ for the problem of controlling the conservation law. From left to right: $\alpha = 0, 0.5,1$.}
	\label{fig:eg4}
\end{figure}

\begin{figure}[htbp!]
	\includegraphics[width=1.0\textwidth,trim=20 20 20 40, clip=true]{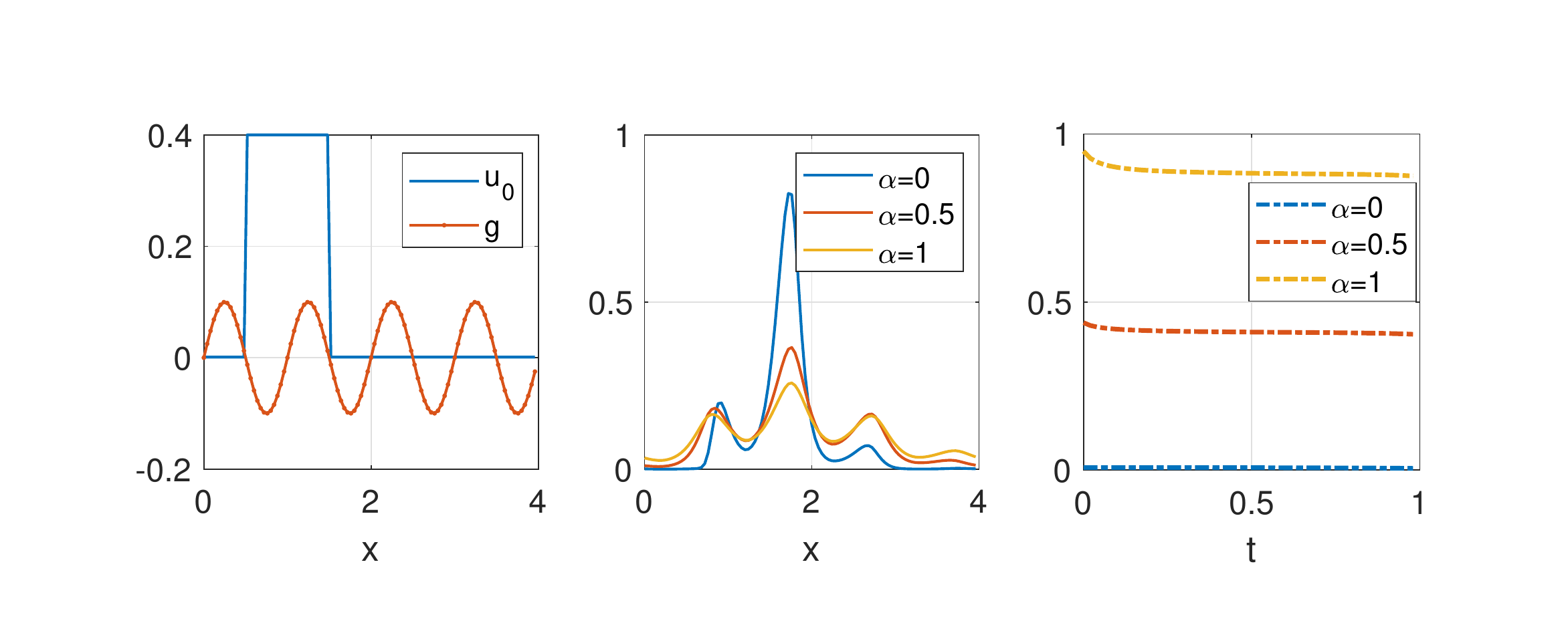}
	\caption{Left: boundary conditions for the control problems $u_0$. Middle: solution $u(1,x)$ for the problem of controlling the conservation law. Right: the numerical Hamiltonian ${H}_{\mathcal{G}}(u,\Phi)$.}
	\label{fig:eg4_H}
\end{figure}

\newpage
\bibliographystyle{abbrv}

\end{document}